\documentclass[12pt,leqno]{amsart}
\usepackage{amssymb,amsmath,amsthm,bm}
\usepackage{graphicx}
\usepackage[dvipsnames]{xcolor}
\usepackage{color}
\usepackage[textsize=tiny]{todonotes}
\usepackage[normalem]{ulem}
\usepackage[bookmarksopen,bookmarksdepth=3,colorlinks,citecolor=red,pagebackref,hypertexnames=true]{hyperref}
\usepackage[msc-links,nobysame,non-sorted-cites, initials]{amsrefs}
\usepackage[inline]{enumitem}
\usepackage{mathtools}
\usepackage{yfonts}
\mathtoolsset{showonlyrefs}

\definecolor{indigo}{rgb}{0.4,0,0.9}
\pdfstringdefDisableCommands{\def\eqref#1{(\ref{#1})}}

\usepackage{lipsum}
\usepackage{mathrsfs}

\makeatletter
\pdfstringdefDisableCommands{\let\HyPsd@CatcodeWarning\@gobble}
\makeatother

\oddsidemargin=-.0cm
\evensidemargin=-.0cm
\textwidth=16cm
\textheight=22cm
\topmargin=0cm

\usepackage{txfonts}
\usepackage[T1]{fontenc}

\newcommand{\norm}[1]{\left\Vert #1 \right\Vert}
  
\newcommand{\avg}[1]{\left\langle #1 \right\rangle}

\newcommand{\PM} {\mathsf{P}}

\newcommand{\bb} {{\mathfrak{b}}}

\newcommand{\D}{\mathcal D}

\newcommand{\Sy}{\mathsf{Sy}}
\newcommand{\Tr}{\mathsf{Tr}}

\newcommand{\ip}[2]{\left\langle {#1},{#2} \right\rangle}
\newcommand{\abs}[1]{\left\vert {#1}\right\vert}

\newcommand{\R}{\mathbb R}

\newcommand{\C}{\mathcal C}

\newcommand{\ep}{\varepsilon}

\DeclareMathOperator{\supp}{{supp}}

\newcounter{thms}

\newcounter{other}
\numberwithin{other}{section}

\newtheorem{proposition}[other]{Proposition}
\newtheorem{theorem}[thms]{Theorem}
\newtheorem*{theorem*}{Theorem}
\newtheorem*{proposition*}{Proposition}
\newtheorem{cor}{Corollary}
\newtheorem*{corollary*}{Corollary}
\numberwithin{cor}{thms}
\newtheorem{lemma}[other]{Lemma}
\theoremstyle{definition}
\newtheorem{remark}[other]{Remark}
\newtheorem{definition}[other]{Definition}

\numberwithin{equation}{section}

\author[F. Di Plinio]{Francesco Di Plinio}
\thanks{F.\ Di Plinio is partially supported by the FRA 2022 Program of University of Napoli Federico II, project ReSinAPAS -
Regularity and Singularity in Analysis, PDEs, and Applied Sciences. }

\address[F. Di Plinio]{Dipartimento di Matematica e Applicazioni, Universit\`a di Napoli \\ \newline \indent Via Cintia, Monte S.\ Angelo 80126 Napoli, Italy}
\email{\href{mailto:francesco.diplinio@unina.it}{\textnormal{francesco.diplinio@unina.it}}}

\author[A. W. Green]{A. Walton Green}
\thanks{A. W. Green's research supported by NSF grant NSF-DMS-2202813.}
\author[B. D. Wick]{Brett D. Wick}
\thanks{B. D. Wick's research partially supported in part by NSF grants NSF-DMS-2054863, NSF-DMS-2349868 as well as ARC DP 220100285.}

\address[A. W. Green, B. D. Wick]{Department of Mathematics, Washington University in Saint Louis\\ \newline \indent 1 Brookings Drive, Saint Louis, Mo 63130, USA}
\email{\href{mailto:awgreen@wustl.edu}{\textnormal{awgreen@wustl.edu}}, \href{mailto:bwick@wustl.edu}{\textnormal{bwick@wustl.edu}}}

\subjclass[2010]{42B20}
\keywords{Wavelet representation theorem,  paraproducts, Triebel-Lizorkin norms, sparse domination, multilinear Calder\'on-Zygmund theory, Sobolev spaces.}

\begin{document}

\title[Multilinear paraproducts]{Multilinear paraproducts on Sobolev spaces}

\begin{abstract} Paraproducts are a special subclass of the multilinear Calder\'on-Zygmund operators, and their Lebesgue space estimates in the full multilinear range are characterized by the $\mathrm{BMO}$ norm of the symbol. In this note, we characterize the Sobolev space boundedness properties of multilinear paraproducts in terms of  a suitable family of Triebel-Lizorkin type norms of the symbol. Coupled with a suitable  wavelet representation theorem, this characterization leads to a new family of Sobolev space $T(1)$-type theorems for multilinear Calder\'on-Zygmund operators. 
\end{abstract}	
\maketitle

\mathtoolsset{showonlyrefs}

\section{Introduction} 
 
Multilinear paraproducts are a special class of multilinear Calder\'on-Zygmund operators. The latter class plays a pivotal role in e.g.\ nonlinear partial differential equations, see \cites{KP,LS} for two well known examples, and  its  systematic study began with the works of Coifman-Meyer \cite{CM}, Kenig-Stein \cite{KS}, and Grafakos-Torres \cite{GT}. While  the mapping  properties of  \emph{linear} paraproducts are well understood in the Lebesgue setting, a sharp characterization of their inhomogeneous Sobolev space behavior was not available in past literature before \cite{DGW3} by these authors. This  note extends the recent results on paraproducts from \cite{DGW3} to the \emph{multilinear case}, and exploits this extension to produce   a new family of Sobolev space $T(1)$-type theorems for multilinear Calder\'on-Zygmund operators.

 We turn to a summary of the main results. The paraproduct operators in question are the $m$-linear operators given by
\begin{equation*}
 \Pi_{\bb}(f_1\ldots,f_m)(x) \coloneqq \sum_{ Q\in \D}  |Q|  b_Q  \zeta_{Q}(f_1,\ldots,f_m) \beta_Q (x), \qquad x\in \mathbb R^d 
 \end{equation*} 
 where \[\bb=\{b_{Q}: Q \in \D\}\] is a sequence of complex numbers indexed by the family of dyadic cubes $\mathcal D$ of $\R^d$,  and where, loosely speaking, $\beta_Q$ is an $L^1$-normalized cancellative wavelet adapted to the cube $Q$, while $\zeta_Q $  is an $m$-linear, non-cancellative averaging form also adapted to $Q$. Rigorous definitions are given in \S\ref{ss:wpf} below.
Identifying  the sequence $\bb$ with the function
\begin{equation*}
\bb\coloneqq \sum_{Q \in \D} |Q|b_Q \varphi_Q
\end{equation*} where $\{\sqrt{|Q|}\varphi_Q:Q\in \mathcal D\}$ is a suitable wavelet basis of $L^2(\R^d)$, the Lebesgue space theory of $m$-linear paraproducts, at least in the open range, may be summarized by the characterization
 \[\Pi_{\bb}:L^{p_1}(\R^d)\times\cdots\times L^{p_m}(\R^d)\to L^p(\R^d), \qquad \frac{1}{p}=\sum_{k=1}^{m}\frac{1}{p_k}, \quad 0<p<\infty, \, 1<p_1,\ldots, p_k\leq \infty\] if and only if $\bb\in \mathrm{BMO}(\R^d)$. 
This equivalence can be made quantitative, in the sense  the operator norm of the multilinear paraproduct is comparable to the $\mathrm{BMO}(\R^d)$ norm of the symbol $\bb$. See \eqref{e:lebbase} below.

 Theorem \ref{paraproduct-main}, Section \ref{sec:pp} obtains a Sobolev space analogue of the above characterization,  requiring natural sharp or near sharp, and in general much weaker conditions than $\mathrm{BMO}(\R^d)$ membership of $\bb$. These conditions are quantified upon certain suitably defined Triebel-Lizorkin type norms with predecessors in the literature, see e.g.\ \cite{YSY}.  
Following the approach of \cites{DGW1,DGW2,DGW3}, a sufficiently smooth multilinear Calder\;on-Zygmund operator may be decomposed  into a finite  sum of  purely cancellative multilinear Calder\'on-Zygmund operators and multilinear paraproduct operators $\Pi_{\bb}$ for appropriate symbols $\bb$ that are connected to the testing conditions of $T$ on monomials.  The reader can see this decomposition in Theorem \ref{thm:rep}.  Combining this representation with Theorem \ref{paraproduct-main} leads to  Corollary \ref{c:sob}, which is a testing type result, in the vein of David-Journ\'e \cite{DJ}, for Sobolev space boundedness of multilinear Calder\'on-Zygmund operators. 

For the sake of simplicity, we restricted ourselves to unweighted Sobolev estimates on all of  $\R^d$. Broader reaching generalizations, such as  testing-type theorems on weighted Sobolev spaces, or representation results covering the domain setting such as in \cites{DGW3,PT} may be considered in the multilinear setting as well. The statements of these cases are left to the interested reader.

\subsection*{Notational conventions} This article studies $n$-linear, $n\geq 1$, singular integral forms acting on tuples of functions on the ambient Euclidean space $\R^d$, $d\geq 1.$ The simplest example is the integral form associated to $\varphi \in L^1(\R^{dn})$ and acting on tuples $(f_1,\ldots, f_n) \in \big(L^{\infty}(\R^d)\big)^n $  \begin{equation}
	\label{e:intform}
\varphi(f_1,\ldots, f_n) \coloneqq \int\displaylimits_{\bigtimes_{j=1}^n \R^d} \varphi (x_1,\ldots, x_n) \prod_{j=1}^n f_j(x_j) \mathrm{d} x_j.
\end{equation}
A  cube $Q$ of $\R^d$ is the Cartesian product of left closed, right open intervals of equal length $\ell(Q)$ and its center is indicated by $c(Q)$.
 The long distance between any two cubes $Q,S\subset \R^d$ is defined as	\[ {\mathfrak d}(Q,S)=\max\{|c(Q)-c(S)|,\ell(Q),\ell(S)\}.\]
 The cubes of $\R^d$ also parametrize the   linear transformations 
\[\begin{split}
&\Sy^{p}_Q f(x)\coloneqq \frac{1}{\ell(Q)^{\frac{nd}{p}}} f\left(\frac{ x-\big(c(Q), \ldots, c(Q)\big)}{\ell(Q)}  \right), \quad x=(x_1,\ldots, x_n)  \in \R^{dn}
\end{split}
\]
with $ 1\leq p \leq \infty.$
Our wavelet decomposition is parametrized by the standard dyadic grid $\mathcal D$ on $\R^d$, and we make use of the notation
\[
\mathcal D(Q)\coloneqq\{R\in \mathcal D: R\subset Q\},  \qquad
Q\in \mathcal D.\] 
The  local  norm of $f\in L^1_{\mathrm{loc}}(\R^d)$ on the cube $Q\subset \R^d$ is denoted by 
\[
\| f\|_{L^p(Q)} \coloneqq
 \|\mathbf{1}_Q f\|_{L^p(\R^d)}, \qquad 
 \langle f \rangle_{p,Q} \coloneqq  |Q|^{-\frac1p} \| f\|_{L^p(Q)},   \quad 0\leq p<\infty.
\]
As customary within the subject, the constants implied by the almost inequality sign may vary at each occurrence and possibly depending on the parameters relevant to each inequality such as e.g.\ exponent tuples, degree of linearity and ambient Euclidean dimension.
\subsubsection*{Conflict of interest statement} On behalf of all authors, the corresponding author states that there is no conflict of interest.

\section{Wavelet resolution, wavelet forms}\label{sec:wavelets}
A fundamental result of Daubechies \cites{D1,D2}, yields a smooth, compactly supported orthonormal basis of $L^2(\mathbb R^d)$ subordinated to the multiresolution $\mathcal D$. We introduce the precise  statement together with our notation for wavelet classes. {{For $n\geq 1$, $0 \le \delta < 1$ and $\rho \ge 0$,   let  \[	\|\varphi\|_{n,\delta,\rho} := \sup_{x \in \R^{nd}} \left(1+ |x|\right)^{dn+\rho}  \left[|\varphi(x)| + \sup_{0 \le |h| \le 1} \dfrac{|\varphi(x+h)-\varphi(x)|}{|h|^{\delta}} \right]. \] 
 For each cube $R$ of $\R^d$, define the $L^1$-normalized class 
	\[ \Phi^{\sigma}_{n} (R)\coloneqq  \left\{  \Sy_R^1\varphi: \varphi \in W^{\lfloor \sigma \rfloor,\infty}(\R^{dn}): \sup_{0 \le |\alpha| \le \lceil \sigma\rceil -1 }\left\|\partial^\alpha \varphi\right\|_{n,\{\sigma\},\sigma} \leq 1 \right\}
	   \]
	   where  $\sigma > 0$ is a smoothness parameter with fractional part $\{\sigma\}=\sigma - \lceil \sigma \rceil+1$, as well as the cancellative subclass 
\begin{equation} 
\label{e:cancelcond}
\Psi^{\sigma}_{n} (R)\coloneqq  \left\{  \varphi\in \Phi^{\sigma}_n (R):\int\displaylimits_{\R^d} x_1^\alpha \varphi(x_1,  \ldots,x_n)\,  \mathrm{d}x_1 =0 \quad  \forall   0 \le |\alpha| \le \lceil \sigma\rceil -1 \right\}.
\end{equation}
In particular, the integral in \eqref{e:cancelcond} are absolutely convergent, as $\{\sigma\}>0$.
{Notice here that we are breaking the symmetry between the $x_1$ and $(x_2,\ldots,x_n)$ variables. We might have defined  a more general family of classes requiring vanishing moments for a subset of the variables $\{1,\ldots, n\}$ of cardinality $\geq 1$, but our needs will be limited to one single cancellative variable at a time.}
The additional superscript  $\Subset$ stands for  the subset of the corresponding class of functions having compact support in $\bigtimes_{j=1}^n \mathsf{w}R$, where $\mathsf{w} \geq 1$ is a dilation parameter specified in the statement of Proposition \ref{prop:triebel}. To wit, 
$\Phi^{\sigma,\Subset}_n (R)\coloneqq\{\varphi\in \Phi^{\sigma}_{n} (R): \mathrm{supp} \,\varphi \Subset \bigtimes_{j=1}^n\mathsf{w} R \}$. The subscript $n$ is omitted when $n=1$.
}

The basic starting point of our analysis is the following form of the wavelet resolution theorem by Daubechies.
\begin{proposition}\label{prop:triebel}  Let $k\geq 1$ be a fixed integer. Then there exists a positive constant $c$ and an $L^1$-normalized family
\begin{equation}
\label{e:triebelref}  \mathfrak F_k \coloneqq
\left\{ \chi_{Q}\in c\Phi^{k+1,\Subset}({Q}),\, \varphi_{Q}\in c\Psi^{k+1,\Subset}({Q}): {Q}\in \mathcal Q \right\}
\end{equation}
with the property that \begin{equation}\label{e:basis}
\mathfrak B^2 \coloneqq
\left\{\sqrt{|{Q}|}\varphi_{Q} : {Q}\in \mathcal D \right\} \end{equation} is an orthonormal basis of $L^2(\R^d)$ and for each $\ell \in \mathbb Z$ and $f \in L^2(\R^d) $, 
	\begin{equation}\label{e:high-low} \sum_{\substack{ Q \in \mathcal D \\ \ell(Q) > 2^\ell}} |Q| \varphi_Q(f) \varphi_Q  =	 \sum_{\substack{ Q \in \mathcal D \\ \ell(Q) = 2^\ell}} |Q| \chi_Q(f) \chi_Q.
	\end{equation} 
\end{proposition}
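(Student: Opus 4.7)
The plan is to build the family $\mathfrak F_k$ out of the compactly supported Daubechies multiresolution analysis of regularity $k+1$, and then to recognize the identity \eqref{e:high-low} as the standard direct-sum decomposition $V_{-\ell}=\bigoplus_{j<-\ell}W_j$ from MRA theory.

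First, I would invoke Daubechies' theorem to produce one-dimensional functions $\phi,\psi\in C^{k+1}_c(\R)$ with $\{\phi(\cdot-m):m\in\Z\}$ orthonormal, $\psi$ possessing vanishing moments of all orders up to $k$, and $\phi,\psi$ generating an MRA of $L^2(\R)$. Tensoring, for $\varepsilon\in\{0,1\}^d$ set $\Phi^{\varepsilon}(x)=\prod_{i=1}^d\phi^{\varepsilon_i}(x_i)$ where $\phi^{0}=\phi$, $\phi^{1}=\psi$, and form the $L^1$-normalized dilates
\[
\Phi^{\varepsilon}_Q(x)\coloneqq \ell(Q)^{-d}\Phi^{\varepsilon}\!\left(\ell(Q)^{-1}(x-c(Q))\right),\qquad Q\in\mathcal D.
\]
Reserve $\chi_Q\coloneqq \Phi^{0}_Q$ as the ``father'' and let $\varphi_Q$ collect the $2^d-1$ orientations with $\varepsilon\neq 0$; as is standard in this setting, the dyadic label $Q\in\mathcal D$ parametrizing the basis $\mathfrak B^2$ is implicitly enlarged by an orientation tag so that a single wavelet per cube carries the full non-cancellative information. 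The memberships $\chi_Q\in c\,\Phi^{k+1,\Subset}(Q)$ and $\varphi_Q\in c\,\Psi^{k+1,\Subset}(Q)$ then follow from the $C^{k+1}$-regularity, compact support, and vanishing moment properties of $\phi,\psi$, after taking $c$ large enough to absorb the finitely many pointwise and derivative norms of $\phi,\psi$ and $\mathsf w$ larger than the diameter of $\supp\phi\cup\supp\psi$.

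Next, the orthonormality claim \eqref{e:basis} is the classical Daubechies theorem in $\R^d$ once the $L^1$-normalization is rescaled by $\sqrt{|Q|}$ to the $L^2$-normalization. To establish \eqref{e:high-low}, I would identify both sides as the orthogonal projection $P_{V_{-\ell}}f$ of $f$ onto the MRA subspace at scale $2^\ell$. The right-hand side is $P_{V_{-\ell}}f$ expressed in the orthonormal basis $\{\sqrt{|Q|}\chi_Q:\ell(Q)=2^\ell\}$ of $V_{-\ell}$ provided by the integer-translate orthonormality of $\phi$ after dilation. The left-hand side is $P_{V_{-\ell}}f$ expressed via the telescoping MRA decomposition $V_{-\ell}=\bigoplus_{j<-\ell}W_j$, where $W_j$ is the closed span of $\{\sqrt{|Q|}\varphi_Q:\ell(Q)=2^{-j}\}$; summing the projections onto $W_j$ for $j<-\ell$, equivalently over $\ell(Q)>2^\ell$, reproduces the left-hand side.

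I do not anticipate a serious obstacle: the work reduces to combining Daubechies' one-dimensional construction with a tensor product and reading off the identity from MRA geometry. The only delicate point is the bookkeeping conversion from the natural multi-orientation indexing $\{(Q,\varepsilon):Q\in\mathcal D,\,\varepsilon\neq 0\}$ to the single-label convention $Q\in\mathcal D$ of the statement, which is a harmless relabeling of the dyadic grid but should be spelled out carefully in the write-up.
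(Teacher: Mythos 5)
Your proposal is correct and is essentially the argument the paper itself relies on: Proposition \ref{prop:triebel} is stated as a quoted form of Daubechies' wavelet resolution theorem with no proof beyond the citation, and your route — choose a compactly supported Daubechies pair of sufficiently high order (so that $C^{k+1}$ regularity forces at least $k+1$ vanishing moments), tensor to $\R^d$, and read \eqref{e:high-low} as the MRA identity $V_{-\ell}=\bigoplus_{j<-\ell}W_j$ together with the scaling-function basis of $V_{-\ell}$ — is exactly the standard argument behind that citation. Your remark about absorbing the $2^d-1$ orientations into the label $Q\in\mathcal D$ is the right (and needed) bookkeeping, consistent with the paper's implicit convention.
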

The above orthonormal expansion  will be taken advantage of in the representation of suitable Calder\'on-Zygmund forms. To ensure unconditional convergence throughout our formulas, it is convenient to work with the approximating  classes
\[
\mathcal {W}_0\coloneqq \mathrm{span}\, \mathfrak B^2, \qquad 
\mathcal W=\left\{f\in L^2(\R^d):  \sup_{Q\in \mathcal D}  \ell(Q)^{-5kd} \sqrt{|Q|}\left|  \varphi_Q(f)\right| <\infty\right\}
\]
whose explicit dependence on $k$ in the notation is omitted.
We omit the easy argument showing density of $\mathcal W$ in $W^{m,p}(\R^d)$ for all $0\leq m\leq k$ and $1<p<\infty$.

\subsection{Wavelet and paraproduct forms} \label{ss:wpf}
Our approach to multilinear singular integral forms is to expand them into simpler model forms of the same type, which we refer to as \textit{wavelet forms}. 
\begin{definition}[Wavelet forms] \label{def:forms}
Let $m\geq 1$ and $\sigma>0$. To collections
 \begin{equation} \label{e:collwave}
 \{\phi_Q \in \Psi^{\sigma,\Subset} (Q) : Q \in \D \}, \quad \{\psi_Q \in \Psi^{\sigma}_m (Q) : Q \in \D \}
 \end{equation} associate the $(m+1)$-linear \textit{wavelet form}
 \begin{equation}
\label{e:triebel2}
 \Lambda(f,f_1\ldots, f_{m}) \coloneqq \sum_{Q \in \D} |Q| \phi_Q(f)\psi_Q(f_1,\ldots, f_{m}).
 \end{equation}
 The savvy reader will realize that we have once again broken the symmetry and reduced the generality of the forms we consider, consistently with the observation made after \eqref{e:cancelcond}. To wit, we might have considered the more general class of forms generated by linear combinations of elements of $\Phi^\sigma_{m+1}(Q)$ with at least two cancellative entries, of which $\phi_Q \otimes \psi_Q $ appearing in \eqref{e:triebel2} is a special case. However, the generality of \eqref{e:triebel2} is sufficient for our purposes.

Each  wavelet form has $(m+1)$ adjoint     $m$-linear Calder\'on-Zygmund singular integral operators on $\R^d$, e.g. as defined in \cite{DGW1}*{Def.\ 3.2} and thus admit $L^p$ and sparse domination estimates. By virtue of the equivalence expounded in  \cite{CDPO1}, we may formulate the latter as domination by the  $n$-linear maximal operator
\[
\mathrm{M}_{\vec p} \left(f_1,\ldots, f_n\right) (x) \coloneqq \sup_{Q \subset \R^d \, \textrm{cube}} \mathbf{1}_Q(x) \prod_{j=1}^n \langle f_j \rangle_{p_j, Q}, \qquad x\in \mathbb R^d
\]
associated to a tuple of exponents $\vec p =(p_1,\ldots, p_n)\in (0,\infty)^n$. When $\vec p =(1,\ldots, 1) $,   the  subscript is omitted.
\begin{proposition} \label{p:sparseforw} Let $\Lambda$ be an $(m+1)$-linear wavelet form. Then
\[
\left| \Lambda(f,f_1\ldots, f_{m}) \right| \lesssim \min\left\{\|f\|_{\mathrm{BMO}(\R^d)} \left\|\mathrm{M}  \left(f_1,\ldots, f_m\right) \right\|_{1}, \left\|\mathrm{M} \left(f,f_1,\ldots, f_m\right) \right\|_{1} \right\}
\]
\end{proposition}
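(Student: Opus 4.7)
My plan is to deduce both bounds from sparse domination of $\Lambda$, which is available because each of its $m{+}1$ adjoints is a multilinear Calder\'on--Zygmund operator in the sense of \cite{DGW1}*{Def.\ 3.2}. Via the multilinear maximal/sparse equivalence of \cite{CDPO1}, this yields a sparse subcollection $\mathcal S\subset\D$, depending on the data, such that
\[
|\Lambda(f,f_1,\ldots,f_m)|\lesssim\sum_{Q\in\mathcal S}|Q|\langle|f|\rangle_Q\prod_{j=1}^m\langle|f_j|\rangle_Q.
\]

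The second bound then falls out of $\eta$-sparsity in one line: pairwise disjoint $E_Q\subset Q$ with $|E_Q|\ge\eta|Q|$ satisfy $\M(f,f_1,\ldots,f_m)(x)\ge\langle|f|\rangle_Q\prod_j\langle|f_j|\rangle_Q$ for every $x\in E_Q$, so $|Q|\langle|f|\rangle_Q\prod_j\langle|f_j|\rangle_Q\le\eta^{-1}\int_{E_Q}\M(f,f_1,\ldots,f_m)$, and summing over $Q\in\mathcal S$ telescopes to $\eta^{-1}\|\M(f,f_1,\ldots,f_m)\|_1$.

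The first bound cannot be obtained merely by inserting $\|f\|_{\BMO}$ in place of $\langle|f|\rangle_Q$, since that average is in general not controlled by the $\BMO$ norm; the cancellation of $\phi_Q$ must genuinely be used. The cleanest route I can see is via $H^1$--$\BMO$ duality. Rewrite
\[
\Lambda(f,f_1,\ldots,f_m)=\ip{f}{G_{f_1,\ldots,f_m}},\qquad G_{f_1,\ldots,f_m}\coloneqq\sum_{Q\in\D}|Q|\overline{\psi_Q(f_1,\ldots,f_m)}\phi_Q.
\]
Since $\mathfrak B^2$ from Proposition \ref{prop:triebel} is a Daubechies wavelet basis, and hence an unconditional basis of $H^1(\R^d)$, one has
\[
\|G_{f_1,\ldots,f_m}\|_{H^1}\asymp \Big\|\Big(\sum_{Q\in\D}|Q||\psi_Q(f_1,\ldots,f_m)|^2\mathbf 1_Q\Big)^{1/2}\Big\|_{L^1(\R^d)}.
\]
A multilinear Carleson/square-function bound on the wavelet coefficients $\psi_Q(f_1,\ldots,f_m)$---a consequence of the same sparse-domination strategy applied to the tent-space-valued $m$-linear form $(f_1,\ldots,f_m)\mapsto\{\psi_Q(f_1,\ldots,f_m)\}_Q$---dominates this $L^1$-norm by $\|\M(f_1,\ldots,f_m)\|_1$, and the pairing $|\ip{f}{G_{f_1,\ldots,f_m}}|\le\|f\|_{\BMO}\|G_{f_1,\ldots,f_m}\|_{H^1}$ then closes the first estimate.

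The principal technical step is the multilinear wavelet square-function bound in the last display; it is where the smoothness and the $x_1$-cancellation of $\psi_Q$ are both invoked. The linear analogue is the wavelet/paraproduct $H^1$ argument carried out in detail in \cite{DGW3}*{\S2}, and its multilinear extension parallels the sparse-domination proof of the $\M$-bound above.
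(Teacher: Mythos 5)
The paper itself does not prove Proposition \ref{p:sparseforw}; it quotes it from \cites{DGW1,DWW}, where both estimates come from one direct stopping-time argument on the wavelet coefficients (the same mechanism as Lemma \ref{l:intest} in this paper): stopping cubes generated by the averages of the non-cancellative entries, Cauchy--Schwarz within each stopping block, a local Littlewood--Paley bound for the cancellative coefficients $\psi_Q(f_1,\ldots,f_m)$, and, for the first estimate, the Carleson-measure property of the coefficients $\phi_Q(f)$ of a $\BMO$ function. Your treatment of the second estimate is exactly the paper's route: sparse domination inherited from the Calder\'on--Zygmund structure of the adjoints, converted to the maximal-function bound via the equivalence of \cite{CDPO1}; your disjointness argument over the sets $E_Q$ is correct. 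For the first estimate you take a genuinely different, dual route: pairing $f\in\BMO$ against $G=\sum_Q |Q|\psi_Q(f_1,\ldots,f_m)\phi_Q$ and reducing to a square-function estimate for the multilinear coefficients. This is a legitimate repackaging, but note what each buys: the direct argument proves the needed local square-function control and the $\BMO$ bound in one pass, whereas your route still requires that same control, only now phrased as an $L^1$ (tent-space) estimate.

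Three points need repair. First, a normalization slip: with $L^1$-normalized $\phi_Q$, the discrete square function controlling $\|G\|_{H^1}$ is $\bigl(\sum_Q |\psi_Q(f_1,\ldots,f_m)|^2\mathbf 1_Q\bigr)^{1/2}$, without the extra factor $|Q|$; as written your right-hand side is larger by $|Q|^{1/2}$ per term and the subsequent domination by $\|\mathrm{M}(f_1,\ldots,f_m)\|_1$ would not go through. Second, the claimed equivalence $\asymp$ via the unconditional basis property is not justified: the $\phi_Q$ in \eqref{e:triebel2} are generic elements of $\Psi^{k+1,\Subset}(Q)$, not the fixed Daubechies wavelets $\varphi_Q$ of Proposition \ref{prop:triebel}. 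What is true, and all you need, is the synthesis direction $\|G\|_{H^1}\lesssim\bigl\|\bigl(\sum_Q|\psi_Q(f_1,\ldots,f_m)|^2\mathbf 1_Q\bigr)^{1/2}\bigr\|_1$, which follows from tent-space/molecular theory using the cancellation and compact support of $\phi_Q$. Third, and most substantively, the ``multilinear Carleson/square-function bound'' $\bigl\|\bigl(\sum_Q|\psi_Q(f_1,\ldots,f_m)|^2\mathbf 1_Q\bigr)^{1/2}\bigr\|_1\lesssim\|\mathrm{M}(f_1,\ldots,f_m)\|_1$ is asserted rather than proved; it is true, but proving it requires precisely the stopping-time construction with stopped square functions (cancellation of $\psi_Q$ in its first entry, averages in the remaining ones) that yields the proposition directly, so as it stands the heart of the first estimate has been deferred to an unproved lemma. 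If you carry out that lemma along the lines of the proof of Lemma \ref{l:intest}, your argument closes; otherwise the first half of the proposition remains unestablished.
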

See \cites{DGW1,DWW} for proofs, and \cite{CDPO1} as well as \cite{DGW2}*{Section 3} for  an account of the weighted norm inequalities ensuing.

\begin{definition}[Paraproducts] \label{def:pp}
Hereafter, \[\bb=\{b_{Q}: Q \in \D\}\] is a sequence of complex numbers. Recalling the basis \eqref{e:basis}, we make the identification of the sequence $\bb$ with the function
\begin{equation}\label{e:seq-id}
\bb\coloneqq \sum_{Q \in \D} |Q|b_Q \varphi_Q.\end{equation}
The $m$-linear operator
 defined by
	\begin{equation}
\label{e:defpp}
 \Pi_{\bb}(f_1\ldots,f_m)(x) \coloneqq \sum_{ Q\in \D}  |Q|  b_Q  \zeta_{Q}(f_1,\ldots,f_m) \beta_Q (x), \qquad x\in \mathbb R^d \end{equation} 
where 
	\[ \left\{\beta_Q \in \Psi^{k+1,{\Subset}}(Q), \,\zeta_Q \in C\Phi^{k+1;\Subset}_m (Q):Q\in \D\right\}\] 
is a generic fixed  family, is called  \textit{paraproduct} with symbol $\bb$. 
  Paraproducts are related to wavelet forms by the  equality 
	\begin{equation}\label{e:lambdap}
	\left\langle \Pi_{\mathfrak{b}}(f_1\ldots,f_m),g\right \rangle=\mathrm{V} (\mathfrak b,g,f_1,\ldots,f_m)	 \end{equation}
where $\mathrm{V}$ is the $(m+2)$-linear wavelet form corresponding to the choices $\phi_Q=\varphi_Q, \psi_Q = \beta_Q \otimes \eta_Q $ in \eqref{e:triebel2}. 
\end{definition}

As is well known, when $\mathfrak{b}=f\in \mathrm{BMO}(\R^d)$,  sparse and weighted Lebesgue space estimates for \eqref{e:defpp} in the full multilinear range may be deduced from Proposition \ref{p:sparseforw}. As a mere example, we point out that
\begin{equation}
	\label{e:lebbase}
\left\|  \Pi_{\mathfrak b}: \prod_{j=1}^m L^{p_j}\to L^r\right\| \lesssim \|\mathfrak b\|_{\mathrm{BMO}(\R^d)}, \qquad 	 1<p_1,\ldots, p_m \leq \infty,\qquad  0< \frac{1}{r} \coloneqq \sum_{j=1}^m \frac{1}{p_j}.  
\end{equation}

The main point of this article is to  obtain  sharp estimates, in terms of requirements on the regularity of the symbol $\mathfrak b$, for the action of   \eqref{e:defpp} on Sobolev spaces. These sharp conditions will be in general much weaker than $\mathrm{BMO}(\R^d)$ membership, will depend on the H\"older exponent tuples $(p_1,\ldots, p_m)$ e.g. in \eqref{e:lebbase},  and will be formulated in terms of Triebel-Lizorkin type norms,  along the lines of \cite{DGW3}*{Section 4} in the linear case. These are the object of the next definition.  
\end{definition}

{

\subsection{Triebel-Lizorkin norms}\label{ss:tl-norms} Sobolev space boundedness of wavelet forms is related to a family of symbol norms generalizing $\mathrm{BMO}(\R^d)$ and described in terms of \textit{intrinsic wavelet coefficients}, defined by the maximal quantity \begin{equation}
\label{e:maxwc} \Psi^{k+1,\Subset}_Q (f) \coloneqq \sup_{\psi\in  \Psi^{k+1,\Subset}_Q } \left| \psi(f)\right|.
\end{equation} Although we only need the wavelet class $\Psi^{k+1,\Subset}_Q$ in this article, we keep the full notation for the sake of  comparison with other works.
Referring to \eqref{e:triebelref}, for  $n\in \mathbb \R$, $n\leq k$,  $1\leq q\leq \infty,$ set
\begin{equation}
\label{e:sfon}
\mathrm{S}^{n}_{q,R} f(x)= \left\Vert \frac{\left|\Psi^{k+1,\Subset}_Z(f)\right|}{\ell(Z)^n} \mathbf{1}_Z(x)\right\Vert_{\ell^q(Z\in \mathcal D(R))}, \qquad R\in \mathcal D.
\end{equation}
The homogeneous  unified Morrey-Campanato-Besov-Triebel-Lizorkin norms we consider are the following. For  $n,m \in \mathbb R, n,m\leq k$ and $1 \le p,q \le \infty$, set
	\begin{align} \label{e:F} \|f\|_{\dot F^{n,m}_{p,q} } &\coloneqq  \sup_{Q \in \mathcal D} \ell(Q)^{-m} \left\langle \mathrm{S}^{n}_{q,Q} f \right\rangle_{p,Q}.  
	\end{align}
	We immediately record the  embeddings
\begin{align}\label{e:Femb}
 \|f\|_{\dot F^{n,m}_{p,q} } & \le \|f\|_{\dot F^{n+u,m-u}_{r,s}(\mathcal R)}, \qquad u \ge 0, \ r \ge p, \ s \le q,
\end{align}
as an immediate consequence of the definition. 
For reference, we point out the norm $ \dot F^{n,m}_{p,q}  $ coincides in essence with the $\dot F^{n,\frac md + \frac 1p}_{p,q}$ norm of \cite{YSY}.
Some clarification on the r\^ole of \eqref{e:F} is provided by the fact that $\mathrm{S}^{n}_{2,R}$ is an instance of the local square function for $|\nabla|^n f$. Indeed,
relying on the $k+1$ vanishing moments of the wavelet, and integrating by parts, standard usage of Littlewood-Paley estimates implies
\begin{equation}
\label{e:lsfo}  
\max\{p,p'\}^{-\frac12} \left \langle \mathrm{S}^{n}_{2,R} f \right\rangle_{p,R}  \lesssim  \inf_{\mathsf{P} \in \mathbb P^{k-n}}  \left\langle \nabla^{n} f - \mathsf P  \right\rangle_{p,\mathsf{w}R} \lesssim 
\max\{p,p'\}^{\frac12}\left\langle \mathrm{S}^{n}_{2,R }f \right\rangle_{p,R}\end{equation}
where $1<p<\infty,$ $ \mathbb P^{m}$ stands for the ring of (vector) polynomials of degree at most $m$, and the implied constants are absolute.
Furthermore, the John-Nirenberg inequality tells us that the norms  $\dot F^{n,0}_{p,2} $, $0< p<\infty$,  are all equivalent, and comparable   with $|\nabla|^n f\in \mathrm{BMO}(\R^d)$.

\subsection{Estimates for localized wavelet forms} The norms in \eqref{e:F} arise in the estimation  of the  {intrinsic wavelet form} localized to some $Q_0\in \mathcal D$, which is defined momentarily in \eqref{e:intrinsicQ_0}.
 For $Q_0 \in \mathcal D$, referring to \eqref{e:triebel2}, we say that the wavelet form $\Lambda$  is \textit{localized} to $Q_0$ if
\[
\psi_Q = 0\quad  \forall Q \not\in  \mathcal D(Q_0), \qquad  \psi_Q \in \Psi^{k+1,\Subset}_m (Q) \quad\forall Q \in \D(Q_0).
\] 
Referring to \eqref{e:maxwc}, each   wavelet form localized to $Q_0$ is dominated by the \emph{intrinsic wavelet form}
\begin{equation}
\label{e:intrinsicQ_0}
\Lambda_{Q_0} (f,f_1,\ldots,f_m) \coloneqq \sum_{Q\in \mathcal D(Q_0)} |Q|\Psi^{k+1,\Subset}_Q (f) \Psi^{k+1,\Subset}_Q (f_1)\prod_{j=2}^m \langle f_j \rangle_{1,wQ}.
\end{equation}
where, as before, $\mathsf{w} \geq 1$ is the dilation parameter specified in the statement of Proposition \ref{prop:triebel}.

\begin{lemma} \label{l:intest} Let $  \theta \in \mathbb Z$  and 
\begin{equation} \label{e:exponents} 
 1< p,q, p_2,\ldots, p_m \leq \infty,\qquad \;{\textstyle\frac{1}{p} +\frac{1}{q} }+ \sum_{j=2}^m{\textstyle\frac{1}{p_m}=1}.
\end{equation}
Then
\[
\Lambda_{Q_0}(b,g,f_2,\ldots, f_m) \lesssim  |Q_0|\ell(Q_0)^{-\theta} \|b\|_{\dot F^{0,-\theta}_{p,2}}  \left\langle   g \right \rangle_{q,\mathsf{w}Q_0}  \prod_{j=2}^m\left\langle  f_j \right \rangle_{p_j,\mathsf{w}Q_0}
\]
with implied constant depending only on $p,p_1,\ldots,p_m$, $d,m$.
\end{lemma}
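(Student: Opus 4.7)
The strategy is to convert the discrete sum defining $\Lambda_{Q_0}$ into a single integral over $Q_0$ of a product of three $x$-dependent factors, and then apply H\"older's inequality with the exponent tuple $(p,q,p_2,\ldots,p_m)$.

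The opening move is to write $|Q|=\int_Q\mathrm{d}x$ and observe that, since $Q\in\mathcal D(Q_0)$ and $\mathsf{w}\geq 1$, one has $\mathsf{w}Q\subset \mathsf{w}Q_0$; hence for every $x\in Q$ the non-cancellative averages obey the pointwise bound
\[
\prod_{j=2}^m\langle f_j\rangle_{1,\mathsf{w}Q}\leq \prod_{j=2}^m\mathrm M\bigl(f_j\mathbf 1_{\mathsf{w}Q_0}\bigr)(x).
\]
Applying the Cauchy--Schwarz inequality to the inner sum $\sum_{Q\ni x}\Psi^{k+1,\Subset}_Q(b)\,\Psi^{k+1,\Subset}_Q(g)$ recovers precisely the two square functions $\mathrm{S}^{0}_{2,Q_0}b(x)$ and $\mathrm{S}^{0}_{2,Q_0}g(x)$ from \eqref{e:sfon}. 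Combining these two steps gives the pointwise integrand estimate
\[
\Lambda_{Q_0}(b,g,f_2,\ldots,f_m)\leq \int_{Q_0}\mathrm{S}^{0}_{2,Q_0}b(x)\,\mathrm{S}^{0}_{2,Q_0}g(x)\prod_{j=2}^m\mathrm M\bigl(f_j\mathbf 1_{\mathsf{w}Q_0}\bigr)(x)\,\mathrm{d}x.
\]

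Next, I would invoke H\"older's inequality with the tuple \eqref{e:exponents}. The $b$-factor is controlled directly by the very definition of the norm in \eqref{e:F}:
\[
\|\mathrm{S}^{0}_{2,Q_0}b\|_{L^p(Q_0)}=|Q_0|^{1/p}\langle \mathrm{S}^{0}_{2,Q_0}b\rangle_{p,Q_0}\leq |Q_0|^{1/p}\ell(Q_0)^{-\theta}\|b\|_{\dot F^{0,-\theta}_{p,2}}.
\]
The $g$-factor is controlled by the local Littlewood--Paley estimate \eqref{e:lsfo} with $n=0$, together with the trivial choice $\mathsf P=0$, giving $\|\mathrm{S}^0_{2,Q_0}g\|_{L^q(Q_0)}\lesssim_q|Q_0|^{1/q}\langle g\rangle_{q,\mathsf{w}Q_0}$. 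Finally, the boundedness $\mathrm M:L^{p_j}\to L^{p_j}$ (for $p_j>1$) together with the support restriction produces $\|\mathrm M(f_j\mathbf 1_{\mathsf wQ_0})\|_{L^{p_j}(Q_0)}\lesssim|Q_0|^{1/p_j}\langle f_j\rangle_{p_j,\mathsf{w}Q_0}$. The volume factors multiply to $|Q_0|^{1/p+1/q+\sum 1/p_j}=|Q_0|$ by the constraint in \eqref{e:exponents}, yielding the asserted inequality.

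The main technical obstacle is the presence of $L^\infty$ endpoints among the exponents: in that case, the square function or maximal function in the corresponding slot diverges pointwise in general and must be replaced by the trivial norm bound $|\Psi^{k+1,\Subset}_Q(g)|\leq \|g\|_{L^\infty(\mathsf{w}Q_0)}$, respectively $\|\mathrm{M}(f_j\mathbf 1_{\mathsf{w}Q_0})\|_{L^\infty}\leq \langle f_j\rangle_{\infty,\mathsf{w}Q_0}$, before applying H\"older to the remaining slots; this modification is routine but must be carried out case by case. Apart from this, the only bookkeeping required is to absorb the geometric dilation constant $\mathsf{w}$ into the implicit constant, which is permitted since $\mathsf{w}$ is fixed by Proposition~\ref{prop:triebel}.
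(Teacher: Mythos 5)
Your main line of argument for $q<\infty$ is correct, and it is in fact a genuine shortcut past the paper's proof: instead of running the stopping-time construction that produces the sparse bound \eqref{e:sparseest33} and only then passing to the integral of $\mathrm{S}^{0}_{2,Q_0}b\cdot\mathrm{S}^{0}_{2,Q_0}g\cdot\prod_j\mathrm{M}(f_j\mathbf 1_{\mathsf w Q_0})$, you reach that same integrand directly by writing $|Q|=\int\mathbf 1_Q$ and applying Cauchy--Schwarz in the scale parameter at each fixed $x$, which is legitimate because the cubes of $\mathcal D(Q_0)$ containing $x$ form a chain and \eqref{e:sfon} is exactly the resulting $\ell^2$ quantity. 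From there your use of the definition of $\dot F^{0,-\theta}_{p,2}$, of \eqref{e:lsfo} with $\mathsf P=0$, and of the $L^{p_j}$-boundedness of $\mathrm M$ reproduces the paper's conclusion, and the endpoints $p=\infty$ and $p_j=\infty$ are indeed harmless (the first by the very definition of the norm, the second by the trivial $L^\infty$ bound for $\mathrm M$). What the shortcut gives up is the sparse estimate \eqref{e:sparseest33} itself, which the paper records as a more precise, reusable statement; but for the stated lemma your route is fine in this range.

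The genuine gap is the endpoint $q=\infty$, which the exponent condition \eqref{e:exponents} allows and which your proposed ``routine'' fix does not handle. If you discard $g$ at the outset via $|\Psi^{k+1,\Subset}_Q(g)|\lesssim\|g\|_{L^\infty(\mathsf w Q_0)}$, the remaining quantity at each $x$ is $\sum_{Q\ni x}\Psi^{k+1,\Subset}_Q(b)\prod_j\langle f_j\rangle_{1,\mathsf wQ}$, an $\ell^1$-in-scale sum of the coefficients of $b$; this is \emph{not} controlled by $\mathrm{S}^{0}_{2,Q_0}b$ nor by $\|b\|_{\dot F^{0,-\theta}_{p,2}}$, whose inner index is $\ell^2$ (already for $\theta=0$ a $\mathrm{BMO}$-type $b$ can have $\Psi^{k+1,\Subset}_Q(b)\sim 1$ along an infinite tower of cubes, so the $\ell^1$ sum diverges while the norm stays finite). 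Nor can you keep the Cauchy--Schwarz and put $\mathrm{S}^{0}_{2,Q_0}g$ in $L^\infty(Q_0)$: the square function of an $L^\infty$ function need not be bounded. What saves the endpoint is precisely the Carleson-measure/John--Nirenberg structure, i.e.\ the bound $\sup_{Q\in\mathcal D(Q_0)}\langle \mathrm{S}^{0}_{2,Q}g\rangle_{1,Q}\lesssim\|g\mathbf 1_{\mathsf w Q_0}\|_{\mathrm{BMO}(\R^d)}\leq\langle g\rangle_{\infty,\mathsf wQ_0}$, used \emph{per cube of a sparse (stopping) family} as in the paper's deduction from \eqref{e:sparseest33}; the global pointwise-H\"older shortcut has no localized $L^1$ average of the square function of $g$ to exploit, so for $q=\infty$ you must either reinstate a stopping-time/sparse localization or invoke a Carleson embedding argument explicitly.
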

\begin{proof} Note that we may assume that $f_1,\ldots, f_m$ are all supported in $\mathsf{w}Q_0$. We will prove the more precise estimate
\begin{equation}
	\label{e:sparseest33}
	\begin{split}
		\Lambda_{Q_0}(b,g,f_2,\ldots, f_m) \lesssim \sum_{Q\in \mathcal S} |Q| \langle \mathrm{S}^{0}_{2,Q} b \rangle_{1,Q} \langle \mathrm{S}^{0}_{2,Q} g \rangle_{1,Q} \prod_{j=2}^m \langle f_j \rangle_{1,Q}
	\end{split}
\end{equation}
for some sparse collection $\mathcal S\subset \mathcal D$ with the property that $Q\subset \mathsf{w}Q_0$ for each $Q\in \mathcal S$. We sketch the proof of \eqref{e:sparseest33}. Let $S\in \mathcal S(Q_0)$ be the collection of maximal elements of $\mathcal D(Q_0)$ with the property that at least one of the inequalities
\begin{align} &
\inf_{x\in Q} \mathrm{S}^{0}_{2,Q_0} b > \Theta  \langle \mathrm{S}^{0}_{2,Q_0} b \rangle_{1,Q_0}, \label{e:stop1} \\
&\inf_{x\in Q} \mathrm{S}^{0}_{2,Q_0} g > \Theta  \langle \mathrm{S}^{0}_{2,Q_0} g \rangle_{1,Q_0}, \label{e:stop2}\\&
\inf_{x\in {Q}} \mathrm{M}\left(f_j \mathbf{1}_{\mathsf{w}Q_0}\right) > \Theta  \langle   f_j \rangle_{1,\mathsf{w}Q_0}, \qquad j=2,\ldots, m, \label{e:stop3}
\end{align}
holds. If $\Theta $ is large enough, the packing condition 
\begin{equation} \label{e:packhere2}
	\sum_{Q\in \mathcal S(Q_0) } |S| \leq 2^{-6} |Q_0|
\end{equation}
is easily verified by the maximal theorem. Define \[
\mathcal G(Q_0)\coloneqq \mathcal D(Q_0)\setminus \bigcup_{S\in \mathcal S(Q_0)}  \mathcal D(S). \]  The principal effect of the stopping conditions \eqref{e:stop1}-\eqref{e:stop2} is that the stopped square function
\[
\mathrm{S}^{0,\star}_{2,Q_0} f(x)\coloneqq \left(\sum_{G\in \mathcal G(Q_0)}{\left|\Psi^{k+1,\Subset}_G(f)\right|^2} \mathbf{1}_Z(x)\right)^{\frac12}, \qquad x\in Q_0
\]
satisfies
\[
\left\|
\mathrm{S}^{0,\star}_{2,Q_0} b \right\|_\infty \leq \Theta  \langle \mathrm{S}^{0}_{2,Q_0} b \rangle_{1,Q_0}, \qquad \left\|
\mathrm{S}^{0,\star}_{2,Q_0} g \right\|_\infty \leq \Theta  \langle \mathrm{S}^{-0}_{2,Q_0} g \rangle_{1,Q_0}.
\]
while \eqref{e:stop3} implies 
\[
\sup_{G\in \mathcal G(Q_0)}    \langle   f_j \rangle_{1,\mathsf{w}G} \lesssim    \langle   f_j \rangle_{1,\mathsf{w}Q_0},  \qquad j=2,\ldots,m.
\]
Therefore, using Cauchy-Schwarz to step to the third line,
\[
\begin{split}  &\quad
\Lambda_{Q_0}(b,g,f_2,\ldots, f_m) - \sum_{S\in \mathcal S(Q_0)}\Lambda_{S}(b,g,f_2,\ldots, f_m) \\ & = \sum_{G\in \mathcal{G}(Q_0)}
|G|\Psi^{k+1,\Subset}_G (b) \Psi^{k+1,\Subset}_G (g)\prod_{j=2}^m \langle f_j \rangle_{1,wG}\\ & \lesssim |Q_0| \left\|
\mathrm{S}^{0,\star}_{2,Q_0} b \right\|_\infty \left\|
\mathrm{S}^{0,\star}_{2,Q_0} g \right\|_\infty \prod_{j=2}^m  \sup_{G\in \mathcal G(Q_0)}  \langle f_j \rangle_{1,wG} \\ & \lesssim |Q_0|
| \langle \mathrm{S}^{0}_{2,Q_0} b \rangle_{1,Q_0} \langle \mathrm{S}^{-0}_{2,Q_0} g \rangle_{1,Q_0} \prod_{j=2}^m \langle f_j \rangle_{1,Q_0}
\end{split}
\] and \eqref{e:sparseest33} is proved by iteration and taking advantage of the packing condition \eqref{e:packhere2}.
We turn to deducing the proposition from \eqref{e:sparseest33}. Suppose first that $q<\infty$. In that case an immediate consequence of \eqref{e:sparseest33} and H\"older inequality is 
\[
\begin{split}
\Lambda_{Q_0}(b,g,f_2,\ldots, f_m) &\lesssim \int_{Q_0} \left[ \mathrm{S}^{0}_{2,Q_0} b \right] \left[ \mathrm{S}^{0}_{2,Q_0} g \right]\prod_{j=2}^m \mathrm{M} \left[f_j\mathbf{1}_{\mathsf{w} Q_0}\right] \\ 
& \lesssim |Q_0|\ell(Q_0)^{-\theta} \|b\|_{\dot F^{0,-\theta}_{p,2}}  \left\langle \mathrm{S}^{0}_{2,Q_0} g \right \rangle_{q, Q_0}  \prod_{j=2}^m\left\langle  f_j \right \rangle_{p_j,\mathsf{w}Q_0}
\end{split} 
\]
and the claimed estimate of the proposition is obtained by using  Littlewood-Paley theory. If $q=\infty$, note that
\[
\sup_{Q\in \mathcal D( Q_0)}
\left\langle \mathrm{S}^{0}_{2,Q} g \right \rangle_{1, Q}  \lesssim \|g\mathbf{1}_{\mathsf{w} Q_0}\|_{\mathrm{BMO}(\R^d)} \leq \left\langle   g \right \rangle_{\infty,\mathsf{w}Q_0}
\]
and apply again  H\"older's inequality in the remaining exponents.  \end{proof}

\addtocounter{other}{1} 
\subsection{Anti-Integration by Parts}
We will frequently need to convert wavelet coefficients of functions into wavelet coefficients of their higher order derivatives. For this reason, it will be helpful to subtract off a polynomial $\PM^{k}_Q f$ (of degree $k-1$) suitably adapted to $f$ on $Q$. 

Let ${\theta}$ be a fixed smooth function on $\R^d$ with $\int_{\R^d} {\theta} =1$ and support in the unit cube, and for each $Q \in \D$, set 
\[{\theta}_Q \coloneqq |Q|^{-1}\mathsf{Sy}_Q{\theta}.\] Given a Schwartz function $f$, then we define the Taylor-type polynomial
	\begin{equation}\label{eq:sob-func-rep}	
	\PM^{k}_Q f(x) \coloneqq \begin{cases} 0 & k=0, \\ \displaystyle \sum_{|\alpha| \le k-1} \frac{1}{\alpha ! } \int_Q {\theta}_Q(y) \partial^\alpha f(y) (x-y)^\alpha \, {\mathrm{d} y}
	& k\geq 1.
	\end{cases} \end{equation} 
\begin{lemma}\label{l:taylor}
Let $Q \in \D$, $k \in \mathbb N$ and $\delta>0$. For each $\phi_Q \in \Phi^{k+\delta}(Q)$, there exists $\phi^{-k}_Q \in \Phi^{\delta}(Q)$ such that for any $f$,
	\begin{equation}\label{e:anti-ibp} \phi_Q(f-\PM^k_Q f) = \ell(Q)^{k} \phi^{-k}_Q(\nabla^k f).\end{equation}
Furthermore, let $P\in \mathcal D$ such that $P \subset CQ$. Then, for any $R \in \D$, there exists $\phi^{-k}_{R,Q} \in C\Phi^{0,\Subset}(Q)$ such that for all $f$,
	\begin{equation}\label{e:neighbor} \chi_R(\PM_P^k f - \PM_Q^k f ) =  \ell(Q) \mathfrak{d}(Q,R)^{k-1} \chi^{-k}_{R,Q}(\nabla^k f).\end{equation}
\end{lemma}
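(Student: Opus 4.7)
\textbf{Approach to \eqref{e:anti-ibp}.} Taylor's theorem with integral remainder, applied at each base point $y\in Q$, gives
\[
f(x)-\sum_{|\alpha|\le k-1}\frac{\partial^\alpha f(y)}{\alpha!}(x-y)^\alpha \;=\; k\sum_{|\alpha|=k}\frac{(x-y)^\alpha}{\alpha!}\int_0^1 (1-t)^{k-1}\partial^\alpha f(y+t(x-y))\,dt.
\]
Averaging against $\theta_Q(y)\,dy$ converts the left-hand side into $f-\PM^k_Q f$. The plan is then to pair with $\phi_Q(x)$, interchange the order of integration, and change variables $z=y+t(x-y)$ in the $x$-integral, producing the representation
\[
\phi_Q(f-\PM^k_Q f)=\ell(Q)^{k}\sum_{|\alpha|=k}\int_{\R^d}\phi^{-k,\alpha}_Q(z)\,\partial^\alpha f(z)\,dz,
\]
where the kernel $\phi^{-k,\alpha}_Q(z)$ is an explicit triple integral of $\phi_Q$, $\theta_Q$, and the weight $(z-y)^\alpha t^{-d-|\alpha|}(1-t)^{k-1}$. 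Verifying $\phi^{-k,\alpha}_Q\in \Phi^\delta(Q)$ then reduces to the three standard checks of $L^1$-normalisation on $Q$, $\delta$-H\"older continuity at scale $\ell(Q)$, and polynomial decay of order $d+\delta$, each inherited from the corresponding $k$-orders-better property of $\phi_Q\in \Phi^{k+\delta}(Q)$ via differentiation under the integral sign.

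\textbf{Approach to \eqref{e:neighbor}.} The pivotal algebraic observation is that both $\PM^k_P$ and $\PM^k_Q$ fix every polynomial of degree $\le k-1$: for such $g$, its order-$(k-1)$ Taylor polynomial centered at any $y$ equals $g$, and the averaging in \eqref{eq:sob-func-rep} therefore returns $g$ itself. Setting $g=\PM^k_Q f$ gives the useful rewriting
\[
\PM^k_P f-\PM^k_Q f=\PM^k_P\bigl(f-\PM^k_Q f\bigr).
\]
Expanding the right-hand side by \eqref{eq:sob-func-rep} and differentiating in $y'$ exhibits $\partial^\beta(f-\PM^k_Q f)(y')$ as a $(k-|\beta|)$-th order Taylor remainder of $\partial^\beta f$ centered at $y\in Q$, contributing a factor $(y'-y)^\alpha$ with $|\alpha|=k-|\beta|$ of size at most $\ell(Q)^{k-|\beta|}$ (since $y,y'\in CQ$). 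Pairing with $\chi_R$ and using the size bound $|x-y'|\lesssim \mathfrak d(Q,R)$ for $x\in \mathsf wR$ and $y'\in P\subset CQ$ gives a total per-$\beta$ scaling of $\ell(Q)^{k-|\beta|}\mathfrak d(Q,R)^{|\beta|}$; summing over $|\beta|\le k-1$ and using $\ell(Q)\le \mathfrak d(Q,R)$ produces the dominant power $\ell(Q)\mathfrak d(Q,R)^{k-1}$. Factoring this scale out, the residual kernel $\chi^{-k,\alpha}_{R,Q}$ is confined by the supports of $\theta_P,\theta_Q$ to a fixed dilate of $Q$ and bounded by $C|Q|^{-1}$, i.e.~lies in $C\Phi^{0,\Subset}(Q)$ as required.

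\textbf{Main obstacle.} The most delicate point is the bookkeeping of smoothness and dimensional scaling across the change of variables $z=y+t(x-y)$ in \eqref{e:anti-ibp}: the Jacobian diverges as $t^{-d}$ when $t\downarrow 0$, and this singularity must be absorbed against the $(x-y)^\alpha$ factor and the polynomial decay of order $d+k+\delta$ encoded in $\phi_Q\in \Phi^{k+\delta}(Q)$; this is where the fractional hypothesis $\delta>0$ enters crucially as an integrability margin. In \eqref{e:neighbor}, the subtler issue is extracting the scaling $\ell(Q)\mathfrak d(Q,R)^{k-1}$ rather than the na\"ive $\mathfrak d(Q,R)^{k}$ that one might expect from $k-1$ geometric factors applied to a bump at scale $\mathfrak d(Q,R)$; this balance hinges on the polynomial-preservation property of $\PM^k_Q$ killing a full degree together with the anti-derivative transfer of the first part leaving exactly one leftover factor $\ell(Q)$ rather than $\mathfrak d(Q,R)$.
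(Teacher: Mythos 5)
The paper states Lemma \ref{l:taylor} without giving a proof, so there is nothing internal to compare against; I assess your proposal on its own terms. Your treatment of \eqref{e:anti-ibp} is the standard route and is sound: Taylor's formula with integral remainder about $y$, averaging against $\theta_Q(y)\,\mathrm{d}y$, pairing with $\phi_Q$ and changing variables $z=y+t(x-y)$ does yield a kernel of the right class, because the $t^{-d-k}$ Jacobian singularity is absorbed by the factor $(z-y)^\alpha$ together with the decay of order $d+k+\delta$ of $\phi_Q$: the $t$-integral produces a bound of order $\ell(Q)^{d+k+\delta}|z-y|^{-(d+\delta)}$ when $|z-y|\gtrsim\ell(Q)$, and an integrable $|z-y|^{-(d-1)}$ singularity otherwise which disappears after the $y$-average against $\theta_Q$; the H\"older-$\delta$ check you defer is routine since $\phi_Q$ has $k$ controlled derivatives. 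So the first half is essentially complete modulo the verifications you flag.

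The gap is in \eqref{e:neighbor}. The identity $\PM^k_Pf-\PM^k_Qf=\PM^k_P\bigl(f-\PM^k_Qf\bigr)$ and the scaling count $\ell(Q)^{k-|\beta|}\mathfrak{d}(Q,R)^{|\beta|}\le \ell(Q)\mathfrak{d}(Q,R)^{k-1}$ are correct, but your final assertion that the residual kernel is ``bounded by $C|Q|^{-1}$'' does not follow from the construction and is false in general. Pushing the segment representation $y+s(y'-y)$, $y\in Q$, $y'\in P$, forward to the variable of $\nabla^k f$ gives a density bounded only by $\min\{(1-s)^{-d}|Q|^{-1},\,s^{-d}|P|^{-1}\}$, and after the $s$-integration the kernel is of size roughly $\ell(P)^{1-d}\ell(Q)^{-1}$ near $P$, exceeding $|Q|^{-1}$ by $(\ell(Q)/\ell(P))^{d-1}$ when $\ell(P)\ll\ell(Q)$ and $d\ge 2$. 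This is not an artifact of your particular construction: for $k=1$ the claimed conclusion would force $\bigl|\int\theta_P f-\int\theta_Q f\bigr|\lesssim \ell(Q)\,\langle|\nabla f|\rangle_{1,\mathsf{w}Q}$, which fails for a bump $f$ of height one supported on $2P$ (left side $\sim 1$, right side $\sim(\ell(P)/\ell(Q))^{d-1}$). Hence your argument is complete only when $\ell(P)\sim\ell(Q)$ (as in the range $A(Q)$ of Section 4, where $\ell(P_i)\ge 3\mathsf{w}\ell(Q)$); to cover the use made in \eqref{e:sparsesplit4}, where $P=Z$ may be much smaller than $Q$, you must weaken the conclusion to: the residual kernel is supported in a fixed dilate of $Q$, has $L^1$ norm $O(1)$, and is dominated by an $L^1$-normalized radially decreasing profile centered at $P$ — which your construction does deliver, and which suffices for the bound by $\ell(Q)^k\inf_{P}\mathrm{M}(\mathbf{1}_{\mathsf{w}Q}\nabla^k f)$ actually needed in the paper. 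As written, the uniform $L^\infty$ claim is a genuine gap (and the same caveat applies to the most literal reading of the class $\Phi^{0,\Subset}(Q)$ in the lemma itself).
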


\addtocounter{other}{1}
\section{Boundedness of paraproduct operators}\label{sec:pp}
This section contains Sobolev space estimates for the paraproduct operators of \eqref{e:defpp}.
In Theorem  \ref{paraproduct-main} below, we indicate by
\[
\left
\langle
\Pi_{\bb}^{\star, j}(f_1,\ldots, f_m), g \right \rangle \coloneqq 
\left
\langle
\Pi_{\bb}(f_1,\ldots, f_{j-1},g,f_{j+1},\ldots, f_m), f_j \right \rangle
\]
the $j$-th adjoint of $\Pi_\bb$, where $f_j$ and $b$ are the entries interacting with the cancellative components of the associated wavelet form.
\begin{theorem}
	 \label{paraproduct-main}Let $\kappa \in  \mathbb Z \cap [-k,k],$  $n_1,\ldots,n_m \in \{0,\ldots,k\}$ such that $n= n_1 + \cdots + n_m \leq k,$ and 
\[ 1<p_1,\ldots, p_m \leq \infty,\qquad  0< \frac{1}{r} \coloneqq \sum_{j=1}^m \frac{1}{p_j}, \qquad \frac{n}{\pi}\coloneqq \sum_{j=1}^m \frac{n_j}{p_j}.\]
Then for each $\ep>0$, there holds
\begin{equation}
\label{e:para-bound}
\left\|  \Pi_{\mathfrak b}: \prod_{j=1}^m W^{\theta_j n,p_j}\to W^{\kappa,r}\right\| \lesssim_\ep \|\mathfrak b\|_{\dot F^{\kappa,-n}_{\pi+\ep,2}}.
\end{equation}
Furthermore, if $\kappa \ge 0$, then for each $j=1,\ldots,m$,
\begin{align}\label{e:para-adjoint-pos} &\left\Vert \Pi_{\bb}^{\star, j} : \prod_{i=1}^m W^{n_i,p_i} \to W^{\kappa,r} \right\Vert \lesssim_\ep \norm{\bb}_{\dot F^{\kappa-n_j,n_j-n}_{\pi^j_+ +\ep,2}},  \\ & \frac{n-n_j}{\pi^j_+} = \sum_{\substack{i=1 \\i \ne j}}^m \frac{n_i}{p_i} \\
\label{e:para-adjoint-neg} &\left\Vert \Pi_{\bb}^{\star, j} : \prod_{i=1}^m W^{n_i,p_i} \to W^{-\kappa,r} \right\Vert \lesssim_\ep \norm{\bb}_{\dot F^{-n_j,n_j-n-\kappa}_{\pi^j_- +\ep,2}},\\  & \frac{n-(n_j-\kappa)}{\pi^j_-} = \kappa + \sum_{\substack{i=1 \\i \ne j}}^m \frac{n_i-\kappa}{p_i} 
. \end{align}
\end{theorem}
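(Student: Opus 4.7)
The plan is to dualize the Sobolev operator norm, expand the resulting pairing into a wavelet form via \eqref{e:lambdap}, apply anti-integration by parts to transfer derivatives from the data onto the wavelets, and finally bound the intrinsic form by a direct extension of Lemma \ref{l:intest}. I sketch the argument for the forward estimate \eqref{e:para-bound}; the adjoint estimates \eqref{e:para-adjoint-pos} and \eqref{e:para-adjoint-neg} follow the same template with the cancellative and non-cancellative slot assignments permuted.

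By \eqref{e:lambdap} and duality, the task is to estimate
\[\mathrm{V}(\bb,h,f_1,\ldots,f_m) = \sum_{Q\in\D} |Q|\,\varphi_Q(\bb)\,\zeta_Q(f_1,\ldots,f_m)\,\beta_Q(h)\]
for $h$ in the unit ball of the predual of $W^{\kappa,r}$. For $\kappa\ge 0$ I identify the pairing with $\langle\nabla^\kappa\Pi_{\bb}(\vec f),g\rangle$, $g\in L^{r'}$, and integrate by parts on $\beta_Q$, yielding $\ell(Q)^{-\kappa}\tilde\beta_Q(g)$ with $\tilde\beta_Q\in c\Psi^{k+1-\kappa,\Subset}(Q)$; the case $\kappa<0$ is dual. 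Next, Lemma \ref{l:taylor} applied in each non-cancellative slot via $f_i = (f_i - \mathsf{P}^{n_i}_Q f_i) + \mathsf{P}^{n_i}_Q f_i$ and multilinearity of $\zeta_Q$ produces the principal term $\ell(Q)^n\hat\zeta_Q(\nabla^{n_1}f_1,\ldots,\nabla^{n_m}f_m)$ for a new $\hat\zeta_Q\in c\Phi^{1,\Subset}_m(Q)$, plus $2^m-1$ Taylor remainder terms. After localization to $Q_0\in\D$ and intrinsic-coefficient domination, the main contribution becomes
\[\sum_{Q\in\D(Q_0)}|Q|\,\ell(Q)^{n}\,\frac{\Psi^{k+1,\Subset}_Q(\bb)}{\ell(Q)^\kappa}\,\Psi^{k+1-\kappa,\Subset}_Q(g)\prod_{j=1}^m\langle\nabla^{n_j}f_j\rangle_{1,\mathsf{w}Q}.\]
The sparse argument underlying Lemma \ref{l:intest}, with $\mathrm{S}^{\kappa}_{2,Q}\bb$ in place of $\mathrm{S}^{0}_{2,Q}b$, controls this by $\|\bb\|_{\dot F^{\kappa,-n}_{\pi+\ep,2}}\langle g\rangle_{r',\mathsf{w}Q_0}\prod_{j}\langle\nabla^{n_j}f_j\rangle_{p_j,\mathsf{w}Q_0}$, and summing over a partition of $\R^d$ combined with \eqref{e:lsfo} to convert the square function on $g$ into $\|g\|_{L^{r'}}$ yields \eqref{e:para-bound}. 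The adjoint estimates proceed identically, but with $f_j$ now in the cancellative slot formerly occupied by $h$; a careful bookkeeping of the $\ell(Q)$-powers produced by anti-IBP on $\beta_Q(f_j)$ (giving $\ell(Q)^{n_j}$), on the remaining $f_i$-entries of $\zeta_Q$ (giving $\ell(Q)^{n-n_j}$), and on the test function slot (giving $\ell(Q)^{\mp\kappa}$) redistributes the indices into the stated $\dot F^{\kappa-n_j,n_j-n}_{\pi^j_++\ep,2}$ and $\dot F^{-n_j,n_j-n-\kappa}_{\pi^j_-+\ep,2}$ norms.

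The hard part will be the control of the $2^m-1$ Taylor remainder terms. Since $\mathsf{P}^{n_i}_Q f_i$ depends on $Q$, the residual sums are not immediately intrinsic wavelet forms. My plan is to telescope along chains of dyadic ancestors using \eqref{e:neighbor}: the fixed-reference polynomial at the base of each chain is absorbed by the full cancellation of $\varphi_Q$ against $\bb$ or of $\tilde\beta_Q$ against $g$ (summed over $Q$), while the along-chain differences repackage into intrinsic-coefficient-type quantities that fit into the same sparse bound as the main term. Verifying that this absorption consumes no more than the $\ep$-room afforded by $\dot F^{\,\cdot,\,\cdot}_{\pi+\ep,2}$---and that the same device handles each of the $m$ independent slots---is the crux of the argument.
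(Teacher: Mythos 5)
Your overall frame (dualize, pass to the wavelet form \eqref{e:lambdap}, Taylor--subtract, dominate by an intrinsic form, run a sparse/stopping argument) is the paper's machinery, but you diverge at the decisive point, and that is where the gap sits. You subtract $\mathsf{P}^{n_i}_Q f_i$ in \emph{every} non-cancellative slot simultaneously and defer the $2^m-1$ remainder terms to a telescoping plan. The paper never attempts this: it proves only the ``corner'' estimates in which all $n$ derivatives sit on a single slot. Lemma \ref{l:mainiter} performs the Taylor subtraction in the $f_1$ slot alone, with a stopping family governed by $\mathrm{M}(\mathbf 1_{\mathsf w Q}\nabla^n f_1)$, the telescoping via \eqref{e:neighbor} applied along that one slot (see \eqref{e:sparsesplit4}--\eqref{e:sparsesplit5}), and the top-cube polynomial handled by the limiting argument \eqref{e:limit-step1}; this yields the sparse bound \eqref{e:sparseafter} and hence \eqref{e:tointerpolate}--\eqref{e:tointerpolate2}. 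The distributed estimate \eqref{e:interpolated} is then obtained by \emph{interpolating} between the $m$ corner estimates, which is exactly where the exponent $\pi$ (the $n_j/n$-weighted harmonic mean of the $p_j$) and the $\ep$-loss enter (Remark \ref{r:epsilon}). Finally $\kappa$ and the adjoint bounds are not produced by changing the wavelet class but by the single master inequality \eqref{e:para-full-range}: integrate by parts and absorb powers of $\ell(Q)$ into the symbol, $\tilde\bb=\sum_Q b_Q\ell(Q)^{\gamma-\iota-\alpha}\varphi_Q$, so that $\|\tilde\bb\|_{\dot F^{0,-\beta}_{\pi+\ep,2}}=\|\bb\|_{\dot F^{\iota+\alpha-\gamma,-\beta}_{\pi+\ep,2}}$, with \eqref{e:anti-ibp} used only when negative smoothness falls on the test-function slot.

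Two concrete problems with your route. First, the remainder terms: your absorption mechanism is not sound. The coefficient $\varphi_Q(\bb)=b_Q$ is a scalar, so there is no ``cancellation of $\varphi_Q$ against $\bb$'' available to kill the base polynomial, and the polynomials $\mathsf{P}^{n_i}_Q f_i$ occupy non-cancellative slots of $\zeta_Q$, so no cancellation helps there either; summing over $Q$ gives no absolute convergence without the very bound you are proving. In the paper such terms are controlled only through the stopping structure (differences of polynomials between nested stopping cubes) and the limit argument, and doing this in $m$ slots at once would require a multi-slot stopping family with a packing estimate that you have not constructed. Second, the exponent bookkeeping does not close: even granting your main term, the sparse form you display forces, through the H\"older constraint of Lemma \ref{l:intest} (cf.\ \eqref{e:exponents}), a symbol exponent $p$ with $\frac1p=1-\frac1{r'}-\sum_{j}\frac1{p_j}=0$ once each $f_j$ retains its $L^{p_j}$ average, i.e.\ $p=\infty$ rather than $\pi+\ep$; you give no mechanism by which the weighted mean $\pi$, or the $\ep$, would emerge from a single sparse estimate, whereas in the paper both come from the interpolation step. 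So the proposal is incomplete precisely at its self-identified crux, and the stated absorption idea cannot repair it; the efficient fix is the paper's corner-plus-interpolation scheme together with the symbol-rescaling trick for $\kappa$ and the adjoints.
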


\begin{remark}\label{r:epsilon}
Let us make a few comments about the role of $\ep$ in Proposition \ref{e:para-bound}. First, taking $\ep=0$ in \eqref{e:para-bound} in fact characterizes the restricted strong-type estimates where each $W^{n_j,p}$ for $n_j > 0$ is replaced by the Lorentz-Sobolev space $W^{n_j,(p,1)}$; see \eqref{e:tointerpolate} below. Second, in \eqref{e:para-adjoint-pos}, when $n_i=0$ for each $i \ne j$, not only can $\ep$ be zero, but $\pi^j_+$  can be taken to be one using the John-Nirenberg equivalence 
	\[ \norm{\bb}_{\dot F^{u,0}_{p,2}} \sim \norm{\bb}_{\dot F^{u,0}_{1,2}}, \quad u \in \mathbb R, \, 1 \le p < \infty.\]
Finally, $\ep$ can be taken to be zero and \eqref{e:para-bound} persists in the supercritical case, where $n_ip_i>d$. We refer the interested reader to the end of the proof of Lemma 4.18 in \cite{DGW2}.
\end{remark}

\addtocounter{other}{1}
\subsection{Proof of Theorem  \ref{paraproduct-main}} \label{ss4:mainline}

The proof    requires a few pieces of additional notation. First, fixing $\vec n=(n_1,\ldots, n_m) \in  \mathbb N^m$ and $n=n_1+\cdots+n_m,$  our test class for the norm inequalities  is $f\in \mathcal C_0^{n+1}(\R^d)$. 
The main new object is a version of the adjoint form \eqref{e:lambdap}  localized to $Q\in \mathcal D$, namely
\begin{equation}
\label{e:addnot1}
\begin{split}
\mathrm{V}_{Q}  (\mathfrak b,g,f_1,\ldots,f_m)& \coloneqq \sum_{R\in \mathcal D(Q)} |R|  b_R  \beta_R(g) \zeta_{R}\left(f_1 ,\ldots,  f_m\right)
\end{split}
\end{equation}
\begin{lemma} \label{l:mainiter} Let $p,q,p_2,\ldots,p_m$ be as in \eqref{e:exponents}.  There exist a sparse collection $\mathcal Z(Q)\subset \mathcal D(Q)$ with the property that
\[\begin{split}&\quad \mathrm{V}_{Q}\left(\mathfrak b, g, f_1-\mathsf{P}_Q^n f_1,f_2,\ldots,f_m\right) \\ &\lesssim \|\mathfrak b\|_{\dot F^{0,-n}_{p,2}} \sum_{Z\in \mathcal Z(Q)} |Z| \langle g \rangle_{q,\mathsf{w}Z} \langle \nabla^{n} f_1 \rangle_{1,\mathsf{w}Z}  \prod_{u=2}^m \langle  f_u \rangle_{p_j,\mathsf{w}Z}.
 \end{split}\]
\end{lemma}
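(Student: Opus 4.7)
The plan is to mimic the sparse iteration of Lemma \ref{l:intest}, preceded by an anti-integration-by-parts step that replaces $f_1 - \PM_Q^n f_1$ by $\nabla^n f_1$. For each $R \in \mathcal D(Q)$ I would write $f_1 - \PM_Q^n f_1 = (f_1 - \PM_R^n f_1) + (\PM_R^n f_1 - \PM_Q^n f_1)$ and apply the $m$-variable extensions of \eqref{e:anti-ibp} and \eqref{e:neighbor} to each summand in the first slot of $\zeta_R$. Since $R \subset Q$ forces $\mathfrak d(Q,R)=\ell(Q)$, the two contributions read $\ell(R)^n \zeta_R^{-n}(\nabla^n f_1, f_2, \ldots, f_m)$ with $\zeta_R^{-n} \in C\Phi^{1,\Subset}_m(R)$, and $\ell(Q)^n \zeta_{R,Q}^{-n}(\nabla^n f_1, f_2, \ldots, f_m)$ with $\zeta_{R,Q}^{-n} \in C\Phi^{0,\Subset}_m(Q)$. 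This splits $\mathrm V_Q(\mathfrak b, g, f_1 - \PM_Q^n f_1, \ldots) = \mathrm V_Q^{(1)} + \mathrm V_Q^{(2)}$, in which the non-cancellative averaging in $\mathrm V_Q^{(1)}$ lives at scale $wR$ carrying weight $\ell(R)^n$, while in $\mathrm V_Q^{(2)}$ it is fixed at scale $wQ$ carrying weight $\ell(Q)^n$.

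Next I would set up a stopping collection $\mathcal S(Q) \subset \mathcal D(Q)$ of maximal cubes $S$ where one of
\[
\inf_S \mathrm{S}^0_{2,Q}\mathfrak b > \Theta \langle \mathrm{S}^0_{2,Q}\mathfrak b \rangle_{1,Q}, \qquad \inf_S \mathrm{S}^0_{2,Q} g > \Theta \langle \mathrm{S}^0_{2,Q} g \rangle_{1,Q},
\]
or $\inf_S \mathrm M(h \mathbf 1_{wQ}) > \Theta \langle h \rangle_{1, wQ}$ for $h \in \{\nabla^n f_1, f_2, \ldots, f_m\}$, holds. For $\Theta$ large, the Hardy-Littlewood and Littlewood-Paley inequalities yield the packing $\sum_{S \in \mathcal S(Q)} |S| \le |Q|/2$. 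On $\mathcal G(Q) \coloneqq \mathcal D(Q) \setminus \bigcup_S \mathcal D(S)$, the stopped square and maximal functions are $L^\infty$-bounded by their $L^1$-averages over $Q$ and $wQ$; using $\ell(R)^n \le \ell(Q)^n$ and Cauchy-Schwarz in the two cancellative slots, the $\mathcal G(Q)$-part of $\mathrm V_Q^{(1)}$ is dominated by
\[
\ell(Q)^n |Q| \langle \mathrm{S}^0_{2,Q}\mathfrak b \rangle_{1,Q} \langle \mathrm{S}^0_{2,Q} g \rangle_{1,Q} \langle \nabla^n f_1 \rangle_{1,wQ} \prod_{u=2}^m \langle f_u \rangle_{1,wQ},
\]
which via $\ell(Q)^n \langle \mathrm{S}^0_{2,Q}\mathfrak b \rangle_{1,Q} \le \|\mathfrak b\|_{\dot F^{0,-n}_{p,2}}$ (Jensen and the norm definition), $\langle \mathrm{S}^0_{2,Q} g \rangle_{1,Q} \lesssim \langle g \rangle_{q,wQ}$ (Jensen, Littlewood-Paley, with John-Nirenberg for $q=\infty$), and $\langle f_u \rangle_{1,wQ} \le \langle f_u \rangle_{p_u,wQ}$ (Jensen) collapses to the target leading term $|Q| \|\mathfrak b\|_{\dot F^{0,-n}_{p,2}} \mathcal A(Q)$, where $\mathcal A(Q) \coloneqq \langle g \rangle_{q,wQ}\langle \nabla^n f_1 \rangle_{1, wQ}\prod_u \langle f_u \rangle_{p_u,wQ}$.

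The $\mathrm V_Q^{(2)}$ piece and the polynomial-difference correction $\sum_S \mathrm V_S(\mathfrak b, g, \PM_S^n f_1 - \PM_Q^n f_1, \ldots)$ produced upon iterating on each $S \in \mathcal S(Q)$ fall under the same estimate: by \eqref{e:neighbor} with $\mathfrak d(Q,R)=\ell(Q)$, the resulting $\zeta^{-n}$-support is at the parent scale $wQ$ with prefactor $\ell(Q)^n$, so the non-cancellative averaging factorizes as $\ell(Q)^n \langle \nabla^n f_1 \rangle_{1,wQ}\prod_u \langle f_u\rangle_{1,wQ}$. The residual bilinear sum $\sum_R |R|\,|b_R \beta_R(g)|$ is controlled by $\int_Q \mathrm{S}^0_{2,Q}\mathfrak b \cdot \mathrm{S}^0_{2,Q} g$ and then by H\"older with dual exponents $(p,\rho)$, $\rho = p/(p-1)$; the equality constraint \eqref{e:exponents} is precisely what grants $\rho \le q$, whence Littlewood-Paley and Jensen again deliver $\langle g \rangle_{q,wQ}$, and the factors $\ell(Q)^{\pm n}$ cancel. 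Combining yields the one-step sparse decomposition
\[
\mathrm V_Q(\mathfrak b, g, f_1 - \PM_Q^n f_1, \ldots) \lesssim \|\mathfrak b\|_{\dot F^{0,-n}_{p,2}} |Q| \mathcal A(Q) + \sum_{S \in \mathcal S(Q)}\mathrm V_S(\mathfrak b, g, f_1 - \PM_S^n f_1, \ldots),
\]
which iterates, with $\mathcal Z(Q) \coloneqq \{Q\} \cup \bigcup_S \mathcal Z(S)$, to the desired sparse bound. The main obstacle is precisely the bookkeeping for these polynomial-difference errors, resolved by the key feature of \eqref{e:neighbor} that it always places the support of the reduced form at the parent scale $wQ$ with factor $\ell(Q)^n$; this absorbs the correction into the leading $Z=Q$ term of each one-step decomposition rather than propagating it down through the iteration.
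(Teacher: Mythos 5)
Your overall skeleton --- a one-step decomposition into a leading term at scale $Q$ plus the localized forms $\mathrm V_S(\mathfrak b,g,f_1-\PM^n_Sf_1,f_2,\ldots,f_m)$ over a stopped family with packing, followed by iteration --- matches the paper's, and your treatment of the good-cube part of $\mathrm V_Q^{(1)}$ is essentially a correct inlining of the sparse argument behind Lemma \ref{l:intest}. The gap is in the treatment of $\mathrm V_Q^{(2)}$ and of the polynomial-difference correction $\sum_{S}\mathrm V_S(\mathfrak b,g,\PM^n_Sf_1-\PM^n_Qf_1,f_2,\ldots,f_m)$. The identity \eqref{e:neighbor} acts only in the first variable of $\zeta_R$: applying it to $\zeta_R(\PM^n_Rf_1-\PM^n_Qf_1,f_2,\ldots,f_m)$ for each fixed $(x_2,\ldots,x_m)$ moves the $x_1$-profile to scale $Q$ with the factor $\ell(Q)^n$, but the dependence on $x_2,\ldots,x_m$ is untouched and remains $L^1$-normalized at scale $R$. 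Hence the reduced function is \emph{not} in $C\Phi^{0,\Subset}_m(Q)$ with a uniform constant (its height in the remaining variables is of order $|R|^{-(m-1)}$, not $|Q|^{-(m-1)}$), and the claimed factorization $\ell(Q)^n\langle\nabla^nf_1\rangle_{1,\mathsf wQ}\prod_{u\ge2}\langle f_u\rangle_{1,\mathsf wQ}$ with a purely bilinear residual $\sum_R|R|\,|b_R\beta_R(g)|$ is unjustified. The correct residual is the full intrinsic multilinear sum $\sum_R|R|\,|b_R|\,|\beta_R(g)|\prod_{u\ge2}\langle f_u\rangle_{1,\mathsf wR}$, i.e.\ a majorant of $\Lambda_Q(\mathfrak b,g,f_2,\ldots,f_m)$ from \eqref{e:intrinsicQ_0}; this is exactly the multilinear difficulty ($m\ge 2$), and for $m=1$ your reduction would be fine.

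This is not a cosmetic slip, because for the correction term the cubes $R$ run over $\mathcal D(S)$ with $S$ stopped, where your non-stopping bounds $\langle f_u\rangle_{1,\mathsf wR}\lesssim\langle f_u\rangle_{1,\mathsf wQ}$ (and the $L^\infty$ bounds for the stopped square functions of $\mathfrak b$ and $g$, which only see good cubes) are unavailable; so the Cauchy--Schwarz plus H\"older bilinear argument cannot substitute for a multilinear one there. The repair is what the paper does: pull out $\sup_{P\in\mathcal D(S)}\langle \PM^n_Sf_1-\PM^n_Qf_1\rangle_{1,\mathsf wP}\lesssim \ell(Q)^n\langle\nabla^nf_1\rangle_{1,\mathsf wQ}$ (the analogue of your \eqref{e:neighbor} step, cf.\ \eqref{e:sparsesplit4}, using the non-stopping of the parent of $S$), dominate what remains by $\Lambda_Q(\mathfrak b,g,f_2,\ldots,f_m)$, and invoke Lemma \ref{l:intest} with $\theta=n$, whose internal stopping argument already handles $\mathfrak b$, $g$, $f_2,\ldots,f_m$ and whose factor $\ell(Q)^{-n}$ cancels your $\ell(Q)^n$. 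Incidentally, this also shows that the extra stopping conditions on $\mathfrak b$, $g$, $f_2,\ldots,f_m$ in your $\mathcal S(Q)$ are unnecessary: the paper stops only on $\mathrm M(\mathbf 1_{\mathsf wQ}\nabla^nf_1)$ and delegates the rest to Lemma \ref{l:intest}.
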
 Lemma \ref{l:mainiter} is proved in \S \ref{ss:mainiter} below. Let us proceed to prove Theorem  \ref{paraproduct-main}. Fix $f_1,\ldots,f_m,g \in \C^{n+1}_0(\mathbb R^d)$. We claim that for any $\ep >0$, there exists a cube $Q_0$ with $\ell(Q_0) \ge 1$ such that
	\begin{equation}\label{e:limit} \left\vert \mathrm V(\bb,g,f_1,\ldots,f_m) - \mathrm V_{Q_0}(\bb,g,f_1-\PM^n_{Q_0} f_1,f_2,\ldots,f_m) \right\vert \le \ep.\end{equation}
Indeed, if $Q \in \mathcal D(Q_0)$ then $\avg{\PM^n_{Q_0}f}_Q \lesssim \sum_{j=0}^{n-1} \avg{\nabla^j f}_{Q_0} \ell(Q_0)^j \lesssim \norm{f}_{W^{n,1}} \ell(Q_0)^{k-1} |Q_0|^{-1}$ so that, by Lemma \ref{l:intest}
	\begin{equation}\label{e:limit-step1} \left\vert \mathrm V_{Q_0}(\bb,g,\PM^n_{Q_0} f_1,f_2,\ldots,f_m) \right\vert \lesssim \ell(Q_0)^{-1} \|\bb\|_{\dot F^{0,-n}_{p,2}}  \left\langle g \right \rangle_{q,\mathsf{w}Q_0}  \prod_{u=2}^m\left\langle  f_u \right \rangle_{p_u,\mathsf{w}Q_0}\end{equation}
which clearly goes to zero as $\ell(Q_0) \to \infty$. Finally, by taking $Q_0$ large enough that $f_1,\ldots,f_m,g$ are all supported on $Q_0$, for any $Q \in \D$ such that $Q_0 \subset Q$, there holds 
	\[ \avg{h}_{\mathsf w Q} \lesssim \frac{|Q_0|}{|Q|} \avg{h}_{Q_0}, \quad h \in \{f_1,\ldots,f_m,g\},\]
which, combined with the trivial estimate $\vert b_Q \vert \lesssim \norm{\bb}_{\dot F^{-n,0}_{1,\infty}} \ell(Q)^{-n}$,
	\begin{equation}\label{e:limit-step2} \begin{split} &\quad \left\vert \sum_{\substack{Q \in \mathcal D, \\ Q_0 \subset Q }} |Q| b_Q \Psi^{n+1,\Subset}_Q(g) \prod_{u=1}^m \avg{f_u}_{\mathsf w Q} \right\vert \\
	& \lesssim \norm{\bb}_{\dot F^{-n,0}_{1,\infty}} \avg{g}_{Q_0} \norm{f}_{L^1} \prod_{u=2}^m \avg{f_u}_{Q_0} \sum_{\substack{Q \in \mathcal D, \\ Q_0 \subset Q }}\ell(Q)^{-n}  \left( \frac{|Q_0|}{|Q|} \right)^{m}.\end{split}\end{equation}
The geometric series clearly goes to $0$ as $\ell(Q_0) \to \infty$. Combining \eqref{e:limit-step1} and \eqref{e:limit-step2} establishes \eqref{e:limit} and thus the following sparse bound holds by virtue of Lemma \ref{l:mainiter}.
\begin{equation}
\label{e:sparseafter} \left\vert \mathrm V ({\mathfrak b},g,f_1,\ldots f_m) \right\vert \lesssim \|b\|_{\dot F^{0,-n}_{p,2}} \left\| \mathrm{M}_{(1,p_2,\ldots,p_m,q)}\left(\nabla^n f_1, f_2, \ldots f_m, g\right) \right\|_1
\end{equation} 
which in particular entails the norm estimate with $p_1=p$ and $r=q'$,
\begin{equation}
\label{e:tointerpolate}
\left\|  \Pi_{\mathfrak b}: W^{n,(p_1,1)}\times L^{p_2}\times\cdots \times   L^{p_m}\to L^r\right\| \lesssim \|\bb\|_{\dot F^{0,-n}_{p_1,2}}. 
\end{equation}
 see e.g. \cite{CDPO2}*{Appendix A}. 
  If one uses strong type estimates, there holds instead
 \begin{equation}
\label{e:tointerpolate2}
\left\|  \Pi_{\mathfrak b}: W^{n,p_1}\times L^{p_2}\times\cdots \times L^{p_m}\to ^L{r}\right\| \lesssim_\varepsilon \|\bb\|_{\dot F^{\kappa,-n}_{p_1+\varepsilon,2}} 
\end{equation}
for $\varepsilon>0$ and it is a bit more manageable to interpolate \eqref{e:tointerpolate2}, although \eqref{e:tointerpolate} may be dealt with as well giving a bit more precise results; see Remark \ref{r:epsilon}.
For instance, \eqref{e:tointerpolate2} can be turned into 
 \begin{equation}
\label{e:interpolated}
\left\|  \Pi_{\mathfrak b}: \prod_{j=1}^m W^{\theta_j n,p_j}\to L^r\right\| \lesssim_\ep \|\bb\|_{\dot F^{0,-n}_{\pi+\ep,2}},
\quad   \theta_j \geq 0,  \; \sum_{j=1}^m \theta_j =1, \; {\textstyle \frac{1}{\pi} }= \sum_{j=1}^m {\textstyle \frac{\theta_j}{p_j}},
\end{equation}
from which we will now derive the three estimates of which Theorem  \ref{paraproduct-main} consists.
In fact, from \eqref{e:interpolated} and integrating by parts we can arrive at the full scale of positive and negative Sobolev space bounds for $\Pi_{\bb}$ and its adjoints. Let $\gamma,\iota,\alpha_j,\beta_j \in \mathbb N$, $j=1,\ldots,n$. Simply by integrating by parts (and in the case $\iota < \gamma$ using \eqref{e:anti-ibp} which relies on the fact that $g$ is paired with an element of $\Psi^{k+1,\Subset}(Q)$),
	\[ \begin{split} &\ip{\Pi_{\bb}(\nabla^{\alpha_1}f_1,\ldots,\nabla^{\alpha_n}f_n)}{\nabla^\iota g} = \mathrm{V}(\tilde \bb,\nabla^{\gamma}g,f_1,f_2,\ldots,f_n), \\ &
	 \tilde \bb = \sum_{Q} b_Q \ell(Q)^{\gamma-\iota-\alpha} \varphi_Q , \quad \alpha = \sum_{j=1}^n \alpha_j. \end{split} \]
Set now $\beta = \sum_{j=1}^n \beta_j$ and let $\pi > 1$ be defined by $\frac{\beta}{\pi} = \sum_{j=1}^n \frac{\beta_j}{p_j}$. With a view towards applying \eqref{e:interpolated} to $\mathrm{V}$ in the above display, notice that
	\[ \norm{\tilde \bb}_{\dot F^{0,-\beta}_{\pi+\ep,2}} = \norm{\bb}_{\dot F^{\iota+\alpha-\gamma,-\beta}_{\pi+\ep,2}}.\]
Therefore applying \eqref{e:interpolated} (take $\theta_j = \frac{\beta_j}{\beta}$), we obtain
	\begin{equation}\label{e:para-full-range} \abs{ \ip{\Pi_{\bb}(\nabla^{\alpha_1}f_1,\ldots,\nabla^{\alpha_n}f_n)}{\nabla^\iota g} } \lesssim \norm{\bb}_{\dot F^{\iota+\alpha-\gamma,-\beta}_{\pi+\ep,2}}\norm{g}_{W^{\gamma,r'}} \prod_{j=1}^m \norm{f_j}_{W^{\beta_j,p_j}} .\end{equation}
Of course, if $\alpha_j$ and $\beta_j$ are both positive for some $j$, then the first step (integrating by parts to move all the $\alpha_j$ derivatives) was a bad idea, so one should actually optimize the choice of $\beta_j$ and $\alpha_j$ before hand, or just assume that for each $j$, $\min\{\alpha_j,\beta_j\}=0$. In our applications \eqref{e:para-bound}, \eqref{e:para-adjoint-pos}, and \eqref{e:para-adjoint-neg}, we will choose the latter option. To prove \eqref{e:para-bound}, take \eqref{e:para-full-range} with
	\[ \alpha_j=0, \quad \beta_j=n_j, \quad \iota=\max\{0,\kappa\}, \quad \gamma = \max\{0,-\kappa\}.\]
Now, to prove the bounds for the adjoints $\Pi^{\star, j}_{\bb}$, first notice that for any $\kappa \ge 0$,
	\[ \ip{\Pi_{\bb}^{\star, j}(f_1,\ldots,f_n)}{\nabla^\kappa g} = \ip{\Pi_{\bb}(f_1,\ldots,f_{j-1},\nabla^\kappa g,f_{j+1},\ldots,f_m)}{f_j}. \]
Now, we may exchange the roles of $r'=q$ and $p_j$ and apply \eqref{e:para-full-range} with
	\[ \alpha_j=\kappa, \quad \beta_j = 0, \quad \alpha_i=0, \quad \beta_i=n_i, \quad \gamma=n_j, \quad \iota=0\] 
to achieve \eqref{e:para-adjoint-pos}. For \eqref{e:para-adjoint-neg}, we do the same thing except take $\alpha_j=0$ and $\beta_j = \kappa$.
\qed 

\subsection{Proof of Lemma \ref{l:mainiter}}\label{ss:mainiter} The proof is iterative, and begins with the following definition. Let $ \mathcal S(Q)$ be the collection of maximal elements  $Z\in \mathcal D(Q)$ with the property that \[
\mathsf{w}Z\subset  \left\{\mathrm{M} \left(\mathbf{1}_{\mathsf{w} Q}\nabla^{n} f_1\right)>C  \langle \nabla^n f_1\rangle_{\mathsf{w}Q}\right\}.\]   Appealing to the maximal theorem, we learn that  
\begin{equation}
\label{e:packinghere}
\sum_{Z\in\mathcal S(Q) } |Z|\leq \frac{|Q|}{4} \end{equation} provided $C$ is chosen sufficiently large.
 The key is the estimation of the difference
\begin{equation}
\label{e:sparsesplit}
\begin{split}&\quad\mathrm{V}_{Q}  (\mathfrak b,g,f_1-\mathsf{P}_Q^n f_1,f_2\ldots,f_m) - \sum_{Z\in \mathcal S(Q) }\mathrm{V}_{Z }  (\mathfrak b,g,f_1-\mathsf{P}_Z^n f_1,f_2,\ldots,f_m)
\\ & =\sum_{G\in \mathcal G(Q)}  b_Q \beta_Q (g) \zeta_{Q}(f_1-\mathsf{P}_Q^n f_1,f_2\ldots,f_m )\\ &\quad +\sum_{Z\in \mathcal S(Q)}\mathrm{V}_{Z}  (\mathfrak b,g, \mathsf{P}_Z^n f_1 -\mathsf{P}_Q^n f_1,f_2\ldots,f_m)
\end{split}
\end{equation}
having  introduced the collection $\mathcal G(Q)\coloneqq  \mathcal D(Q)\setminus \bigcup\{\mathcal D(Z): Z\in \mathcal S(Q)\}.$ 
We estimate the first term in \eqref{e:sparsesplit}. The key is to use the two estimates of Lemma \ref{l:taylor} and telescoping to get that for each $P\in \mathcal D(Q)$
\begin{equation}
\label{e:sparsesplit3}
\langle f_1 -\PM^{n}_{Q}f_1 \rangle_{1,\mathsf{w}P} \lesssim   \ell(Q)^n   \inf_{P} \mathrm{M} (\mathbf{1}_{\mathsf w Q}\nabla^n f_1).
\end{equation}
 Referring to \eqref{e:intrinsicQ_0}, we then have
\begin{equation}
\label{e:sparsesplit2}
\begin{split} &\quad\sum_{G\in \mathcal G(Q) }|b_Q |  | \beta_Q (g)  | \left|\zeta_{Q}(f_1-\mathsf{P}_Q^n f_1,f_2\ldots,f_m ) \right| \\ &\leq \left(\sup_{G\in \mathcal G(Q)  } \langle f_1 -\PM^{n}_{Q}f_1 \rangle_{1,\mathsf{w}G}\right) \Lambda_Q(b,g,f_2,\ldots,f_m) \\ &\lesssim   |Q| \|b\|_{\dot F^{\kappa,-n}_{p,2}}\langle \nabla^n f_1\rangle_{\mathsf{w}Q} \langle |\nabla|^{-\kappa} g \rangle_{q,\mathsf{w}Q}  
 \prod_{u=2}^m \langle  f_u \rangle_{p_j,\mathsf{w}Z}
\end{split}
\end{equation}
having used \eqref{e:sparsesplit3}, the non-stopping nature of $\mathcal G(Q)$, and Lemma \ref{l:intest} to pass to the third line. For the estimation of the second term in \eqref{e:sparsesplit}, take into account the bound  \begin{equation}
\label{e:sparsesplit4}
\langle\PM^{n}_{Z}f_1-\PM^{n}_{Q}f_1 \rangle_{1,\mathsf{w}P} \lesssim   \ell(Q)^n   \inf_{Z} \mathrm{M} (\mathbf{1}_{\mathsf w Q}\nabla^n f_1)
\end{equation}for each $P\in \mathcal D(Z)$ and $Z\in \mathcal S(Q)$,
which is also a consequence of Lemma \ref{l:taylor} and telescoping, and estimate
\begin{equation}
\label{e:sparsesplit5}
\begin{split} &\quad\sum_{Z\in \mathcal S(Q)}\mathrm{V}_{Z}  (\mathfrak b,g, \mathsf{P}_Z^n f_1 -\mathsf{P}_Q^n f_1,f_2\ldots,f_m) \\ & \lesssim  \ell(Q)^n\left( \sup_{Z\in \mathcal S(Q)} \inf_{Z} \mathrm{M} (\mathbf{1}_{\mathsf w Q}\nabla^n f_1)\right)\Lambda_Q(b,g,f_2,\ldots,f_m)
\\ & \lesssim  \ell(Q)^n\langle \nabla^n f_1\rangle_{\mathsf{w}Q}\Lambda_Q(b,g,f_2,\ldots,f_m)  \\ & \lesssim  |Q| \|b\|_{\dot F^{\kappa,-\theta}_{p,2}}\langle \nabla^n f_1\rangle_{\mathsf{w}Q} \langle |\nabla|^{-\kappa} g \rangle_{q,\mathsf{w}Q}  
 \prod_{u=2}^m \langle  f_u \rangle_{p_j,\mathsf{w}Z}.
\end{split}
\end{equation}
using the non-stopping nature of the parent of $Z\in \mathcal S(Q)$, and arguing just as for \eqref{e:sparsesplit2}. We have turned \eqref{e:sparsesplit} into the estimate
\[\begin{split}
\left|\mathrm{V}_{Q}  (\mathfrak b,g,f_1-\mathsf{P}_Q^n f_1,f_2\ldots,f_m) \right| &\leq C |Q| \|b\|_{\dot F^{\kappa,-n}_{p,2}}\langle \nabla^n f_1\rangle_{\mathsf{w}Q} \langle |\nabla|^{-\kappa} g \rangle_{q,\mathsf{w}Q} \prod_{u=2}^m \langle  f_u \rangle_{p_j,\mathsf{w}Z}\\ &\quad +  \sum_{Z\in \mathcal S(Q) }\left|\mathrm{V}_{Z}  (\mathfrak b,g,f_1-\mathsf{P}_Z^n f_1,f_2,\ldots,f_m)\right|
\end{split}
\]
which may be iterated, yielding the sparse collection $\mathcal Z(Q)$ in view of the packing estimate \eqref{e:packinghere}. We omit the well-known details. This completes the proof of the Lemma.

\section{Representation and Sobolev regularity of  multilinear Calder\'on-Zygmund operators}\label{sec:rep}
As anticipated in the introduction,  multilinear singular integrals of Calder\'on-Zygmund type enjoying additional smoothness properties can be represented as a finite linear combination of wavelet operators and paraproducts, extending to the multilinear, non-homogeneous case the analysis initiated by the authors in \cite{DGW1}, see also \cite{DWW} for the linear homogeneous case.  
The estimates of Section \ref{sec:pp} then yield Sobolev-type testing conditions  on the paraproduct symbols occurring in the representation, which should be seen as an extension of the classical result by David and Journ\'e \cite{DJ} to the Sobolev case. Before the statement, let us provide a precise definition for the class of singular integral forms we represent.
\begin{definition}\label{def:si} Let $k \in \mathbb N$, $n\geq 1$, $\delta>0$. We say an $({n}+1)$-linear form $\Lambda$ acting on $(n+1)$-tuples of   is a \textit{normalized $k$-smooth  $(\mathrm{SI}_\delta)$ form}  if the following conditions hold
First, $\Lambda$ satisfies the \textit{weak boundedness property}, 
		\begin{equation}\label{WBP}   |{Q}|^n |\Lambda( \chi_{Q}^0,\chi_{Q}^1,\ldots,\chi_{Q}^n )| \le 1, \quad \forall  \chi_{Q}^j \in \Phi^{k+\delta,\Subset}({Q}), \quad {Q} \in\D.\end{equation}
Second, the \textit{$k+\delta$ kernel estimates} hold for $\Lambda$; that is,  there exists a kernel $K$ on $\R^{(n+1)d}$, locally integrable off the diagonal $\{(y,\ldots,y):y\in \R^{d}\}$ in $\R^{(n+1)d}$, such that for any $f_0,\ldots, f_n \in \mathcal W_0$ with $\cap_{j=0}^n \supp f_j =\varnothing$, there holds
\[  \Lambda( f_0,f_1,\ldots,f_n) = \int_{\mathbb R^{d(n+1)}} K(x) \prod_{j=0}^n f_j(x_j) \, \mathrm{d} x \]
and for all $0 \le j \le k$,
	\[ |\nabla_{x_0}^j K(x)| \le \left(\sum_{i =1}^n|x_0-x_i|\right)^{-nd-j}, \quad x = (x_0,x_1,\ldots,x_n) \in (\mathbb R^d)^{n+1}, \]
	\[ \left(\sum_{i =1}^n|x_0-x_i|\right)^{k} |\nabla_{x_0}^k \Delta^0_h K(x)| + \max_{i=1,\ldots,n} | \Delta^i_h K(x) | \le |h|^\delta \left(\sum_{i=1}^n|x_0-x_i|\right)^{-nd-\delta},\]
where $\Delta_h^i$ denotes the difference in the $i$-th variable, 
	\[ \Delta_h^i K(x) \coloneqq K(x_0,\ldots,x_i+h,\ldots,x_n) - K(x_0,\ldots,x_n).\]

\end{definition}
\begin{remark}\label{rem:symm} 
Let $ \Lambda^{\star, j}(f)$ be the $j$-th adjoint of $\Lambda$, that is
	\[ \Lambda^{\star, j} (f_0,f_1,\ldots,f_n) = \Lambda(f_j,f_1,\ldots,f_{j-1},f_0,f_{j+1}, \ldots,f_n).\]
In general, it is \textit {not} true that  $ \Lambda^{\star, j}$ is a  $k$-smooth  $(\mathrm{SI}_\delta)$ form for $j \ne 0$ whenever $\Lambda$ is. However, in such a case $\Lambda^{\star, j}$ is   $0$-smooth  $(\mathrm{SI}_\delta)$form. See \cite{DGW3}*{Section 5} for details.
\end{remark}

\subsection{Testing conditions} 
Let $\gamma  =(\gamma_1,\ldots, \gamma_n)\in \otimes_{j=1}^n \mathbb N^{d}$ with $
\|\gamma\|_{\ell^1}\le k$. Define the \textit{paraproduct symbols of $\Lambda$ of order $\gamma$} by the sequences 
\begin{equation} \label{def:cz}
	\begin{split}
	&\bb^{\gamma} =\{b^{\gamma}_{Q}:Q \in \mathcal S\},  \quad \bb^{\star,  j} = \{b^{\star,  j}_{Q}:Q \in \mathcal S\};\\
		&b^{\gamma}_{Q} \coloneqq \ell(Q)^k \Lambda\big(\varphi_Q, \Tr_Q \mathsf{x}_1^{\gamma_1}, \ldots, \Tr_Q  \mathsf{x}_n^{\gamma_n}\big),
		 \qquad  \quad |\gamma| \le k ;\\[6pt]
		&b^{\star ,j}_{Q} \coloneqq \ell(Q)^k \Lambda^{\star,j} \big(\varphi_Q,\mathbf 1_1,\ldots \mathbf 1_n\big),\qquad\qquad \quad \,
		   j=1,\ldots,n.
		\end{split}
	\end{equation}
Recall that each $\mathsf{x}_j^{\gamma_j}: \mathbb R^d \to \mathbb R$ is the monomial function $\mathsf{x}_j^{\gamma_j}(y_1,\ldots, y_d)=y_1^{(\gamma_j)_1}\cdots y_d^{(\gamma_j)_d} $. With reference to \eqref{e:seq-id}, form each  function $\bb^{\gamma}$ or $\bb^{\star,  j}$ from the corresponding sequence of wavelet coefficients. It is immediate from the weak boundedness and kernel estimates of a $k$-smooth  $(\mathrm{SI}_\delta)$ form that
	\begin{equation}\label{e:Finfty} \|\bb^{\gamma}\|_{F^{|\gamma|-k,0}_{\infty,\infty}} \lesssim 1, \qquad |\gamma | \leq k\end{equation} 
	cf.\ \cite{DGW2} and estimate \eqref{e:basis}.
However, stronger testing type conditions on the symbols $\bb^{\gamma}$ are needed to ensure Sobolev space bounds for the form $\Lambda$. See \eqref{e:Fn} below.
Since our representation theorem is motivated by Sobolev estimates for the adjoint operators, for any $(n+1)$-linear form $\Lambda$, we introduce the vector form
	\[ \nabla^k \Lambda (f_0,f_1,\ldots,f_n) \coloneqq \left( \Lambda(\partial^\alpha f_0,f_1,\ldots,f_n) : |\alpha|=k \right).\]

\begin{theorem}\label{thm:rep}
Let $\Lambda$ be a normalized $k$-smooth $(n+1)$-linear SI form. There exists wavelet forms $\{\mathrm{V}_j\}_{j=1}^{m}$ such that for all $f_j \in \mathcal S(\mathbb R^d)$,
\[ \nabla^k \Lambda(f) = \sum_{j=1}^m \left[ \nabla^k \mathrm V_j^{\star,  j}(f) + \nabla^k \Pi_{\bb^{\star,  j}}^{\star,  j} (f) \right]+ \sum_{|\gamma| \le k} \Pi_{\bb^{\gamma}}(f_0,\partial^{\gamma_1}f_1,\ldots,\partial^{\gamma_n}f_n), \]
\end{theorem}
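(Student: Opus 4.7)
The plan follows the blueprint of \cite{DGW1,DWW} transported into the multilinear, inhomogeneous setting. I would begin by transferring the $\nabla^k$ onto the first slot of $\Lambda$ and inserting the Daubechies wavelet resolution \eqref{e:basis} into every entry. This produces a sum over $(n+1)$-tuples of dyadic cubes
\[
\Lambda(\nabla^k f_0,f_1,\ldots,f_n) = \sum_{(Q_0,\ldots,Q_n)\in \mathcal D^{n+1}} \Bigl(\prod_{j=0}^n |Q_j|\,\varphi_{Q_j}(g_j)\Bigr)\, \Lambda(\varphi_{Q_0},\ldots,\varphi_{Q_n}),
\]
with $g_0=\nabla^k f_0$ and $g_j=f_j$ for $j\ge 1$. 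Absolute convergence of the rearrangements to come is secured by testing on the dense class $\mathcal W$ introduced in Section \ref{sec:wavelets}.

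Next I would partition the sum according to which slot $j^\star\in\{0,\ldots,n\}$ carries the strictly smallest cube (with a separate, easily handled diagonal term for tuples having two cubes of equal minimal scale). On each such piece, the cancellation of $\varphi_{Q_{j^\star}}$, the $k+\delta$ kernel estimates of Definition \ref{def:si}, and the weak boundedness property \eqref{WBP} together supply the off-diagonal decay of $\Lambda(\varphi_{Q_0},\ldots,\varphi_{Q_n})$ needed to represent the piece as (the $j^\star$-th adjoint of) a wavelet form $\mathrm V_{j^\star}$ of Definition \ref{def:forms}---provided the remaining $n$ slots are all pushed down to scale $\ell(Q_{j^\star})$. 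For each $j\ne j^\star$, this is done by iterating the high--low identity \eqref{e:high-low} to convert the tail $\sum_{\ell(Q_j)>\ell(Q_{j^\star})}|Q_j|\varphi_{Q_j}(f_j)\varphi_{Q_j}$ into the single non-cancellative piece $|Q_j|\chi_{Q_j}(f_j)\chi_{Q_j}$ at the common scale. The resulting $n$ non-cancellative slots destroy the wavelet form structure on their own; the corresponding defect is precisely what the paraproducts in the statement must absorb.

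To identify the paraproduct symbols, I would substitute in each common-scale coefficient $\chi_Q(f_j)$ the Taylor-type polynomial $\PM^k_Q f_j$ of \eqref{eq:sob-func-rep}, feeding the Taylor residues back into the wavelet form $\mathrm V_{j^\star}$ by means of \eqref{e:anti-ibp}--\eqref{e:neighbor}. For $j^\star=0$ the leading Taylor monomials expose, for every $\gamma=(\gamma_1,\ldots,\gamma_n)$ with $|\gamma|\le k$, a contribution of the shape
\[
\sum_{Q\in\mathcal D} |Q|\,\bigl[\ell(Q)^k\Lambda(\varphi_Q,\Tr_Q \mathsf{x}_1^{\gamma_1},\ldots,\Tr_Q \mathsf{x}_n^{\gamma_n})\bigr]\,\varphi_Q(\nabla^k f_0)\,\zeta_Q(\partial^{\gamma_1}f_1,\ldots,\partial^{\gamma_n}f_n),
\]
which by the definition \eqref{def:cz} of $b^\gamma_Q$ and the identification \eqref{e:seq-id} is exactly $\Pi_{\bb^{\gamma}}(f_0,\partial^{\gamma_1}f_1,\ldots,\partial^{\gamma_n}f_n)$ paired against $\nabla^k f_0$. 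For $j^\star\ne 0$ the symmetric computation, now with constants $\mathbf{1}_i$ replacing the monomials in the non-cancellative slots of $\Lambda^{\star,j^\star}$, yields $\nabla^k\Pi_{\bb^{\star,j^\star}}^{\star,j^\star}$; Remark \ref{rem:symm} is the tool that makes this step legitimate, as $\Lambda^{\star,j^\star}$ retains the $0$-smooth $(\mathrm{SI}_\delta)$ structure required by the construction.

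The principal technical obstacle will be the bookkeeping of regularity and cancellation budgets: after every Taylor subtraction, anti-integration by parts, and high--low substitution, one must confirm that the kernel of each leftover piece still possesses enough smoothness and vanishing moments to qualify as a wavelet form in the sense of Definition \ref{def:forms}, so that the estimates of Proposition \ref{p:sparseforw} and Lemma \ref{l:intest} can eventually be applied to it. This accounting is done through \eqref{e:anti-ibp}--\eqref{e:neighbor}, which record exactly how much smoothness is lost whenever a derivative is transferred from a test function to a wavelet or a polynomial is subtracted. Combining the symbol identifications above with this cancellation audit produces the decomposition in the stated form.
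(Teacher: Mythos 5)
Your overall blueprint (wavelet resolution in every slot, splitting according to the smallest cube, the high--low identity \eqref{e:high-low} to bring the remaining slots to a common non-cancellative scale, then Taylor subtraction plus anti-integration by parts to extract paraproducts) is the same skeleton the paper uses via the precursor forms $\Sigma_0,\Sigma_j$ and Proposition \ref{p:step}, and your treatment of the adjoint pieces $j^\star\neq 0$ (only zeroth-order subtraction, $(P,Q)_\delta$ decay sufficing because $\ell(P)\ge\ell(Q)$, Remark \ref{rem:symm}) matches the paper. However, there is a genuine gap in the $j^\star=0$ piece, and it is exactly the step the paper flags as ``the major difference between the multilinear and linear setting.'' You propose to subtract the full degree-$k$ Taylor polynomial $\PM^k_Q f_j$ \emph{in each slot independently} and then assert that the polynomial parts expose only contributions indexed by $\gamma=(\gamma_1,\dots,\gamma_n)$ with total degree $|\gamma|\le k$. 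With per-slot subtraction this is false: the pure polynomial term is a product of degree-$(k-1)$ polynomials in $n$ slots, so its expansion produces monomial tuples of total degree up to $n(k-1)$, whose symbols $\ell(Q)^k\Lambda(\varphi_Q,\Tr_Q\mathsf{x}_1^{\gamma_1},\dots,\Tr_Q\mathsf{x}_n^{\gamma_n})$ with $|\gamma|>k$ are not among the $\bb^\gamma$ of \eqref{def:cz} and do not appear in the stated identity; moreover the mixed terms (a residue in one slot, high-degree polynomials in the others) exceed the total smoothness/decay budget available, since the kernel estimates of Definition \ref{def:si} and the single factor $\ell(Q)^{\pm k}$ coming from moving $\nabla^k$ off $f_0$ only provide a budget of order $k+\delta$ to be \emph{shared} among all slots. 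This is precisely the route of \cite{DGW1} that the paper's footnote says it improves upon.

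The missing ingredient is the telescoping subtraction of Proposition \ref{p:step}: in the near range $A(Q)$ one writes $\psi^1_{P_1}=\tau^k_1+\mathsf{T}^k_1$, telescopes $\mathsf{T}^k_1=\sum_j\dot{\mathsf{T}}^j_1$, and in the next slot subtracts only a polynomial of the \emph{complementary} degree $k-j$, and so on, so that every term is either of the form \eqref{e:A} with $\sum_i\beta_i=k$ (hence carries the full $(P,Q)_{k+\delta}$ decay needed for wavelet averaging after integration by parts) or a pure polynomial term with multi-index of total degree $|\beta|\le k$, which is exactly one of the paraproducts $\Pi_{\bb^\gamma}$, $|\gamma|\le k$, in the statement. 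Without this shared-budget telescoping, your ``bookkeeping of regularity and cancellation budgets,'' which you defer to the end, cannot close: either you generate paraproducts absent from the statement (and from the testing conditions \eqref{e:Fn}), or you need more kernel smoothness than the $(\mathrm{SI}_\delta)$ hypotheses provide.
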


To obtain Sobolev bounds from Theorem \ref{thm:rep}, we introduce the following norms associated to the paraproduct symbols. 
For $1 \le p \le q \le \infty$, define
\begin{equation}
 \label{e:Fn}\begin{aligned}
\norm{\Lambda}_{\dot F(k,p,q)} &\coloneqq  \sup_{ |\gamma|<k-\lfloor \frac dp \rfloor  } \left\|\bb^{\gamma}\right\|_{ \dot F^{0,|\gamma|-k}_{p,2}} +\sup_{k-\lfloor \frac dp \rfloor \leq |\gamma|\leq k-1} \left\|\bb^{\gamma}\right\|_{ \dot F^{0,|\gamma|-k}_{q,2} }, \\
	&\quad + \sup_{|\gamma|=k} \left\|\bb^{\gamma}\right\|_{{\dot F^{0,0}_{1,2}} } + \sup_{j=1,\ldots,n} \left\|\bb^{\star,  j}\right\|_{\dot F^{-k,0}_{1,2}}
\end{aligned}\end{equation}

\begin{cor}\label{c:sob}
Let $1 < p,p_1,\ldots,p_n \le \infty$ satisfy $\frac{1}{p} = \sum_{j=1}^n \frac{1}{p_j}$. Then, for any $q > p$,
	\[ \norm{T(f_1,\ldots,f_n)}_{W^{k,p}} \lesssim \left( 1 + \norm{\Lambda}_{\dot F(k,p,q)} \right)\sum_{\substack{\beta \in \mathbb N^n \\ |\beta|=k}} \prod_{j=1}^n \norm{f_j}_{W^{\beta_j,p_j}}.\]
\end{cor}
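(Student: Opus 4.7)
The argument is essentially an assembly of the pieces built in the previous sections. By duality and the standard characterization $\|h\|_{W^{k,p}} \sim \|h\|_{L^p} + \|\nabla^k h\|_{L^p}$, it suffices to bound
\[ \sup_{\|g\|_{L^{p'}} \le 1} |\Lambda(\partial^\alpha g, f_1, \ldots, f_n)|\]
for $|\alpha| \in \{0, k\}$. The base case $|\alpha| = 0$ is the standard $L^p$ multilinear Calder\'on-Zygmund bound for $\Lambda$, which is a direct consequence of Proposition \ref{p:sparseforw} applied to a wavelet realization of $\Lambda$; since $\|f_j\|_{L^{p_j}} \le \|f_j\|_{W^{\beta_j,p_j}}$ for every $\beta_j \ge 0$, this contributes only the additive $1$ in $1 + \|\Lambda\|_{\dot F(k,p,q)}$. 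The top order case $|\alpha| = k$ is the substance of the argument, and for it we invoke Theorem \ref{thm:rep} to split $\nabla^k \Lambda(g, f_1, \ldots, f_n)$ into purely cancellative wavelet forms plus two species of paraproduct contributions.

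The wavelet form terms $\nabla^k \mathrm{V}_j^{\star, j}(g, f_1, \ldots, f_n)$ are handled by transferring the $k$ derivatives onto one of the cancellative slots of the form via the anti-integration by parts Lemma \ref{l:taylor}, after which Proposition \ref{p:sparseforw} applied to the resulting wavelet form yields a bound by $\|g\|_{L^{p'}}\|\nabla^k f_i\|_{L^{p_i}}\prod_{\ell \ne i}\|f_\ell\|_{L^{p_\ell}}$ for an appropriate index $i$; this corresponds to the summand $\beta = k e_i$ on the right-hand side of the claimed inequality. The adjoint paraproducts $\nabla^k \Pi_{\bb^{\star, j}}^{\star, j}(g, f_1, \ldots, f_n)$ fall directly under estimate \eqref{e:para-adjoint-pos} of Theorem \ref{paraproduct-main} specialized to $\kappa = k$ and $n_i = 0$ for all $i \ne j$; the John-Nirenberg equivalence recorded in Remark \ref{r:epsilon} allows us to take $\pi^j_+ = 1$ and $\ep = 0$, producing precisely the factor $\|\bb^{\star, j}\|_{\dot F^{-k, 0}_{1, 2}}$ appearing in \eqref{e:Fn}.

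The third family, paraproducts of the form $\Pi_{\bb^\gamma}(g, \partial^{\gamma_1} f_1, \ldots, \partial^{\gamma_n} f_n)$ with $|\gamma| \le k$, is controlled by \eqref{e:para-bound} with derivative orders $n_j = |\gamma_j|$ distributed onto the $f_j$'s; any residual $k - |\gamma|$ derivatives coming from the $\nabla^k$ on the left of the representation are absorbed as smoothness into the symbol $\bb^\gamma$. The three ranges of $|\gamma|$ entering the definition \eqref{e:Fn} of $\|\Lambda\|_{\dot F(k, p, q)}$ arise naturally as follows. For $|\gamma| < k - \lfloor d/p \rfloor$ the residual derivative order $k - |\gamma|$ is supercritical, so by Remark \ref{r:epsilon} we may take $\ep = 0$ and bound by $\|\bb^\gamma\|_{\dot F^{0, |\gamma| - k}_{p, 2}}$; for $k - \lfloor d/p \rfloor \le |\gamma| \le k - 1$ we are subcritical and need a strict $\ep > 0$, so we fix $\ep$ once and for all so that $p + \ep \le q$, producing $\|\bb^\gamma\|_{\dot F^{0, |\gamma| - k}_{q, 2}}$; at the endpoint $|\gamma| = k$ no derivatives survive and the John-Nirenberg equivalence of Remark \ref{r:epsilon} allows the exponent $\pi = 1$, producing $\|\bb^\gamma\|_{\dot F^{0, 0}_{1, 2}}$. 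Summing all contributions, organizing the derivative multi-indices into multi-indices $\beta$ with $|\beta| = k$, and taking the supremum over $\|g\|_{L^{p'}} \le 1$ completes the proof. The main bookkeeping subtlety lies in matching the Triebel-Lizorkin parameters across the subcritical/supercritical transition $|\gamma| = k - \lfloor d/p \rfloor$; this is precisely what the three-case design of the norm \eqref{e:Fn} and Remark \ref{r:epsilon} are engineered to accommodate.
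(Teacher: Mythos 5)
Your overall architecture — duality, reduction to $\nabla^k\Lambda(g,f_1,\ldots,f_n)$, Theorem \ref{thm:rep}, then Proposition \ref{p:sparseforw} for the cancellative forms and Theorem \ref{paraproduct-main} for the two species of paraproducts — is exactly the route the paper intends (it supplies no written proof of the corollary), but two of your steps have genuine gaps. First, the lower-order piece: you dismiss the $|\alpha|=0$ case as "the standard $L^p$ multilinear Calder\'on--Zygmund bound for $\Lambda$, a direct consequence of Proposition \ref{p:sparseforw} applied to a wavelet realization of $\Lambda$," contributing only the additive $1$. Proposition \ref{p:sparseforw} applies only to purely cancellative wavelet forms; a normalized $k$-smooth $(\mathrm{SI}_\delta)$ form satisfying just the weak boundedness property and kernel estimates is \emph{not} $L^p$-bounded in general — that is precisely what $T(1)$-type testing hypotheses are for — and any wavelet resolution of $\Lambda$ itself contains paraproduct pieces whose symbols are only known to satisfy \eqref{e:Finfty}, which is far from a $\mathrm{BMO}$ bound. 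So the $L^p$ part of the $W^{k,p}$ norm cannot be absorbed into the constant $1$; it must itself be run through the representation/paraproduct machinery (with $\kappa=0$ outputs and the derivative budget placed on the $f_j$'s), consuming part of $\norm{\Lambda}_{\dot F(k,p,q)}$.

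Second, the exponent bookkeeping does not check out as written. For the adjoint paraproducts, plugging $\kappa=k$ and $n_i=0$ for $i\ne j$ into \eqref{e:para-adjoint-pos} produces the norm $\dot F^{\,k-n_j,0}_{1,2}$, and no admissible $n_j\le k$ yields the index $-k$; to arrive at $\norm{\bb^{\star,j}}_{\dot F^{-k,0}_{1,2}}$ one must first trade the weight $\ell(Q)^k$ built into the definition \eqref{def:cz} against the $k$ derivatives landing on the non-cancellative slot (an integration by parts shifting the smoothness index, as in the $\tilde\bb$ manipulation of \S\ref{ss4:mainline}), and your "precisely the factor $\norm{\bb^{\star,j}}_{\dot F^{-k,0}_{1,2}}$" is asserted, not verified. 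More seriously, for the $\Pi_{\bb^\gamma}$ terms, \eqref{e:para-bound}/\eqref{e:interpolated} give the symbol index $\pi+\ep$ where $\tfrac{n}{\pi}=\sum_j\tfrac{n_j}{p_j}$ forces $\pi\ge\min_i p_i$, which is in general strictly larger than $p$ and can exceed any fixed $q>p$ (e.g.\ $n=2$, $p_1=p_2=4$, so $p=2$, with $q=3$). Since the norms increase in the integrability index by \eqref{e:Femb}, a bound by $\dot F_{\pi+\ep,2}$ does \emph{not} imply the claimed bound by $\dot F_{q,2}$, let alone $\dot F_{p,2}$; your "fix $\ep$ so that $p+\ep\le q$" conflates $p$ with $\pi$. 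Closing this requires the finer mechanisms only gestured at in Remark \ref{r:epsilon} — the restricted-type estimate \eqref{e:tointerpolate}, Lorentz/Sobolev redistribution of integrability among the entries, and the supercritical improvement from \cite{DGW2} — none of which appear in your argument.
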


\begin{remark} By placing stronger, $p$-independent, testing conditions on the symbols $\bb^\gamma$, one can in fact take $p<1$, and obtain the full range of multilinear weighted estimates. In fact, this was the approach taken in \cite{DGW1}, while our focus in this paper was to provide weaker $p$-dependent conditions to ensure Sobolev boundedness.
\end{remark}

\subsection{Outline of proof of representation Theorem}
Setting
	\[ \mathcal D^+_n(Q) = \left\{ P=(P_1,\ldots,P_n) \in \times_{i=1}^n \mathcal D : \ell(P_1) = \ldots = \ell(P_n) \ge \ell(Q)\right\},\]
the precursors to wavelet forms are
	\[ \begin{aligned} \Sigma_0(f) &= \sum_{Q \in \D} |Q| \phi_Q(\nabla^k f_0) \sum_{P \in \mathcal D^+_n(Q)} \Lambda(\phi_Q,\psi^1_{P_1},\psi^2_{P_2},\ldots,\psi^n_{P_n}) \prod_{i=1}^n |P_i|\psi_{P_i}^i(f_i) , \\
	\Sigma_j(f) &= \sum_{Q \in \D} |Q| \phi_Q(f_0) \sum_{P \in \mathcal D^+_n(Q)} \Lambda(\phi_Q,\psi^1_{P_1},\psi^2_{P_2},\ldots,\psi^n_{P_n}) |P_j| \psi_{P_j}^j(\nabla^k f_j) \prod_{\substack{i=1 \\ i \ne j}}^n |P_i|\psi_{P_i}^i(f_i) , \\
	&\phi_Q,\psi^1_Q \in \Psi^{k+\delta,\Subset}(Q), \quad \psi_P^j \in \Phi^{k+\delta,\Subset}(P).\end{aligned} \]
The main step in the representation theorem is
\begin{proposition}\label{p:step} There exists wavelet forms $\mathrm{V}_j$ such that
	\[ \Sigma_0(f) = \mathrm{V}_0(f_0,\nabla^k f_1,f_2,\ldots,f_m) + \sum_{|\gamma| \le k}\Pi_{\mathfrak{b}^\gamma}(f_0,\partial^{\gamma_1} f_1,\ldots,\partial^{\gamma_n}f_n).\]
and for $j=1,\ldots,n$,
	\[ \Sigma_j(f) = \mathrm{V}_j(\nabla^k f_0,f_1,\ldots,f_n) + \Pi^{\star,j}_{\mathfrak b^{\star,  j}}(f).\]
\end{proposition}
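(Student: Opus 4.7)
\textbf{Proof plan for Proposition \ref{p:step}.} Following the philosophy of \cites{DGW1,DWW,DGW3}, partition for each $Q \in \D$ the inner sum over $P = (P_1,\ldots,P_n) \in \mathcal D^+_n(Q)$ into a diagonal part $\mathcal A(Q) = \{P : \ell(P_i)=\ell(Q),\, i=1,\ldots,n\}$ and a tail $\mathcal B(Q) = \mathcal D^+_n(Q)\setminus\mathcal A(Q)$. The diagonal will assemble into the wavelet form $\mathrm V_j$; the tail, processed by the reproducing identity \eqref{e:high-low} followed by the Taylor subtraction of Lemma \ref{l:taylor}, will produce the paraproduct contribution---the full $|\gamma|\le k$ family for $\Sigma_0$ and the single adjoint symbol $\bb^{\star,j}$ for $\Sigma_j$.

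On the diagonal piece, the compact support of $\phi_Q$ and of the cancellative wavelet factor in $\mathsf w Q$, together with the kernel decay from Definition \ref{def:si}, force only a uniformly bounded family of neighbors $P_i \subset CQ$ of scale $\ell(Q)$ to contribute; the weak boundedness property \eqref{WBP} gives a uniform bound on the coefficient $\Lambda(\phi_Q,\psi^1_{P_1},\ldots,\psi^n_{P_n})$, so after relabeling this part is a wavelet form as in Definition \ref{def:forms}. For $\Sigma_0$, integrate by parts in $f_0$ to obtain $\phi_Q(\nabla^k f_0) = \ell(Q)^{-k}\hat\phi_Q(f_0)$ with $\hat\phi_Q \in \Psi^\delta$ cancellative, and apply Lemma \ref{l:taylor} to $\psi^1_{P_1}(f_1) = \psi^1_{P_1}(f_1 - \PM^k_{P_1} f_1) = \ell(Q)^k (\psi^1_{P_1})^{-k}(\nabla^k f_1)$; the factors $\ell(Q)^{\pm k}$ cancel, yielding $\mathrm V_0(f_0, \nabla^k f_1, f_2, \ldots, f_n)$. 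The analogous manipulation in the $j$-th coordinate produces $\mathrm V_j(\nabla^k f_0, f_1, \ldots, f_n)$ in the $\Sigma_j$ case.

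For the tail, apply \eqref{e:high-low} iteratively in each coordinate $i$ with $\ell(P_i) > \ell(Q)$, replacing $\sum_{\ell(P_i)>\ell(Q)} |P_i|\psi^i_{P_i}(f_i)\psi^i_{P_i}$ by its low-pass reconstruction at scale $\ell(Q)$, which localizes to finitely many cubes $Q' \subset CQ$. In $\Sigma_0$, expand each $\chi_{Q'}(f_i)$ by the Taylor polynomial $\PM^k_Q f_i$ of \eqref{eq:sob-func-rep}: the polynomial part is a linear combination of monomials $\mathsf x_i^{\gamma_i}$ with coefficients $\ell(Q)^{|\gamma_i|}{\theta}_Q(\partial^{\gamma_i}f_i)$, and insertion into $\Lambda(\phi_Q,\cdot,\ldots,\cdot)$ reproduces precisely $\ell(Q)^{-k}\bb^\gamma_Q$ from \eqref{def:cz}; summing over $|\gamma|\le k$ assembles $\sum_{|\gamma|\le k}\Pi_{\bb^\gamma}(f_0,\partial^{\gamma_1}f_1,\ldots,\partial^{\gamma_n}f_n)$. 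The Taylor remainders $\chi_{Q'}(f_i-\PM^k_Q f_i) = \ell(Q)^k \chi^{-k}_{Q',Q}(\nabla^k f_i)$ from \eqref{e:neighbor} carry a factor $\ell(Q)^k$ which cancels the $\ell(Q)^{-k}$ coming from having moved $\nabla^k$ off $f_0$, so these residuals absorb into $\mathrm V_0$. In $\Sigma_j$, the $\nabla^k$ already sits on $f_j$, so no excess $\ell(Q)^{-k}$ is available to pay for higher-order Taylor terms; only the constant term ${\theta}_Q(f_i) = \langle f_i\rangle_Q$ for $i\ne j$ survives, producing the single symbol $\bb^{\star,j}_Q = \ell(Q)^k \Lambda^{\star,j}(\varphi_Q,\mathbf 1,\ldots,\mathbf 1)$ and hence $\Pi^{\star,j}_{\bb^{\star,j}}(f)$, while all higher remainders pair with the cancellative $\psi^j_{P_j}$ and absorb into $\mathrm V_j$.

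The main obstacle is the bookkeeping for convergence and smoothness across the iterated high-low cascade and the Taylor subtractions. Each invocation of Lemma \ref{l:taylor} trades smoothness $k+\delta$ for smoothness $\delta$, so one must verify at every step that residuals remain in the admissible wavelet class \eqref{e:collwave}; this is precisely why Definition \ref{def:si} demands $k+\delta$ kernel regularity. Finite overlap of the neighbor families $\{Q':Q'\subset CQ,\ \ell(Q')=\ell(Q)\}$ keeps every sum locally finite, and unconditional convergence follows on the dense class $\mathcal W_0$.
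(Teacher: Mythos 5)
Your plan misses the idea the paper's proof actually turns on: the \emph{telescoping, degree-sharing} Taylor subtraction applied to the non-cancellative bumps $\psi^i_{P_i}$ themselves (not to the inputs $f_i$), in the region $A(Q)$ of tuples $P$ whose cubes are much larger than $Q$ and sit near it. There one writes $\psi^1_{P_1}=\tau^k_1+\mathsf{T}^k_1$, telescopes $\mathsf{T}^k_1=\sum_j\dot{\mathsf T}^j_1$, and lets each subsequent coordinate spend only the \emph{remaining} budget $k-j$, so that every resulting term either carries a remainder factor together with polynomial factors whose orders sum to exactly $k$ --- yielding the $(P,Q)_{k+\delta}$ decay needed to integrate by parts and apply wavelet averaging into $\mathrm V_0$ --- or is a pure polynomial term of \emph{total} degree $|\gamma|\le k$, which is precisely what produces the symbols $b^\gamma_Q$ of \eqref{def:cz}. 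Your proposal instead expands each coordinate independently to order $k$ (``$\PM^k_Q f_i$'' for every $i$), which would generate coefficients $\Lambda(\varphi_Q,\mathsf x^{\gamma_1},\ldots,\mathsf x^{\gamma_n})$ of total degree up to $nk$; these are not among the symbols in the statement and are not controlled by a $k$-smooth kernel, while capping each coordinate at a lower order without the telescoping leaves remainders with too little decay. This budget bookkeeping is exactly the multilinear difficulty the proposition exists to handle, and it is absent from your argument.

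Several concrete steps also fail as written. (i) You cannot collapse the tail with \eqref{e:high-low}: that identity is for the fixed orthonormal family $\{\varphi_Q\}$, whereas in $\Sigma_0,\Sigma_j$ the $\psi^i_{P_i}$ are arbitrary members of $\Phi^{k+\delta,\Subset}(P_i)$ coupled to $Q$ and to each other through the coefficient $\Lambda(\phi_Q,\psi^1_{P_1},\ldots,\psi^n_{P_n})$; the high-low reconstruction belongs to the reduction of Theorem \ref{thm:rep} to Proposition \ref{p:step} (the argument of \cite{DGW1}, p.~80), not to the proof of the proposition itself. (ii) The identity $\psi^1_{P_1}(f_1)=\psi^1_{P_1}(f_1-\PM^k_{P_1}f_1)$ is false because $\psi^1_{P_1}$ has no vanishing moments; Lemma \ref{l:taylor} only converts after you keep track of the subtracted polynomial, and keeping it is exactly what generates the paraproducts. (iii) On your ``diagonal'' the coefficients do not vanish for distant $P_i$ --- they merely decay --- so finite overlap is not available and one needs the wavelet averaging lemma of \cite{DGW1} instead; relatedly, the paper's split is not diagonal-versus-larger-scale but rather $A(Q)$ versus its complement, the latter being handled by kernel decay and weak boundedness. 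Your heuristic for $\Sigma_j$, $j\ge1$ (only the zeroth-order term survives because no excess $\ell(Q)^{-k}$ is available) matches the paper's conclusion that a single $\mathsf T^0$ subtraction with $(P,Q)_\delta$ decay suffices since $\ell(P_j)\ge\ell(Q)$, but the mechanism you give for it --- expanding the inputs rather than the bumps --- is not the one that makes the identity in the proposition come out.
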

Following the argument in \cite{DGW1}*{p. 80}, Theorem \ref{thm:rep} follows from Proposition \ref{p:step} and the wavelet resolution in Proposition \ref{prop:triebel}.

\begin{proof}[Proof of Proposition \ref{p:step}]
The major difference between the multilinear and linear setting of \cites{DWW,DGW2} is the method of subtracting off the Taylor polynomial in the range
	\[ A(Q) = \{ P \in \mathcal D^+_n(Q): \ell(P_i) \ge 3 \mathsf{w}\ell(Q), \ \max_i \{|c(P_i)-c(Q)|\} \le 3 \mathsf{w} \ell(P_i)\}.\]
Introduce the Taylor polynomials
	\[ \mathsf{T}^j_Q f = \PM^{j+1}_Q f, \quad \dot{\mathsf{T}}^j_Q f = \mathsf{T}^j_Qf - \mathsf{T}^{j-1}_Q f.\]
In fact, the method we propose here is more efficient than \cite{DGW1}.\footnote{Notice that in \cite{DGW1}, one actually subtracts off paraproducts for each $|\gamma_i| \le k$.} For simplicity, let us fix $Q$ and $P$, thus omitting the dependence on them in what follows. Introduce the shorthand
	\[ \tau^{j}_i = \psi^i_{P_i}- \mathsf{T}^j_Q \psi^i_{P_i}, \quad \mathsf{T}^j_i = \mathsf{T}^j_Q \psi^i_{P_i}, \quad \dot{\mathsf{T}}^j_i = \dot{\mathsf{T}}^j_Q \psi^i_{P_i}. \]
Expand
	\begin{equation}\label{e:split1} \Lambda(\phi_Q,\psi^1_{P_1},\ldots,\psi^n_{P_n}) = \Lambda(\phi_Q,\tau^{k}_1,\psi^2_{P_2},\ldots,\psi^n_{P_n}) + \Lambda(\phi_Q,\mathsf{T}^k_1,\psi^2_{P_2},\ldots,\psi^n_{P_n}).\end{equation}
Leave alone the first term and for the second term, expand
	\[\begin{aligned} &\Lambda(\phi_Q,\mathsf{T}^k_1,\psi^2_{P_2},\psi^3_{P_3},\ldots,\psi^n_{P_n}) = \sum_{j=0}^k \Lambda(\phi_Q,\dot {\mathsf{T}}^j_1,\psi^2_{P_2},\psi^3_{P_3}\ldots,\psi^n_{P_n}) \\
	&= \sum_{j=0}^k \Lambda(\phi_Q,\dot {\mathsf{T}}^j_1,\tau^{k-j}_2,\psi^3_{P_3},\ldots,\psi^n_P) + \sum_{j=0}^k \Lambda(\phi_Q,\dot {\mathsf{T}}^j_1,\mathsf{T}^{k-j}_2,\psi^3_{P_3},\ldots,\psi^n_P). \end{aligned} \]
Continuing this process, we obtain $\Lambda(\phi_Q,\psi^1_{P_1},\ldots,\psi^n_{P_n}) = A + B$ where $A$ is finite sum of terms of the form
	\begin{equation}\label{e:A} \Lambda(\phi_Q, \dot{\mathsf{T}}^{\beta_1}_1,\ldots,\dot{\mathsf{T}}^{\beta_j}_j,\tau^{\beta_{j+1}}_{j+1},\psi^{j+2}_{P_{j+2}},\ldots,\psi^{n}_{P_n}), \ j \le n-1, \ \sum_{i=1}^{j+1} \beta_i = k.\end{equation}
and
	\[ B = \sum_{\substack{\beta \in \mathbb N^n \\ |\beta| \le k}} \Lambda(\phi_Q,\dot{\mathsf{T}}^{\beta_1}_1,\ldots,\dot{\mathsf{T}}^{\beta_n}_n).\]
The argument in \cite{DGW1}*{Lemma 3.3} shows that each term of the form \eqref{e:A} is controlled by $(P,Q)_{k+\delta}$ and hence one may integrate by parts, apply wavelet averaging \cite{DGW1}*{Lemma 2.4}, and convert that portion of $\Sigma_0$ into a wavelet form. We claim that $B$ can be converted into paraproducts by the same reasoning as \cite{DGW2}*{Proof of Theorem A, pp. 48-49}.

Handling $\Sigma_j$ for $j \ne 0$ is much easier and only requires subtracting off $\mathsf{T}^0$ in which case the telescoping argument is not needed. This splits $\Lambda(\phi_Q,\psi_P)$ into a term which is controlled by $(P,Q)_\delta$ and the zeroth order paraproduct. In this case, the $(P,Q)_\delta$ decay suffices for wavelet averaging after integrating by parts since $\ell(P) \ge \ell(Q)$. \end{proof}

\bibliography{}
\bibliographystyle{amsplain}

\end{document}